\newtheorem{theorem}{Theorem}[section]
\newtheorem{proposition}[theorem]{Proposition}
\newtheorem{lemma}[theorem]{Lemma}
\newtheorem{corollary}[theorem]{Corollary}
\theoremstyle{definition}
\theoremstyle{remark}
\newtheorem{remark}[theorem]{\bf Remark}
\newtheorem{remarks}[theorem]{\bf Remarks}
 \numberwithin{equation}{section}
\theoremstyle{definition}
\theoremstyle{remark}
\numberwithin{equation}{section}
\begin{document}

\title{ ADS modules}
\date{}
\author{{\bf \normalsize Adel Alahmadi\footnote{This research was supported by the Deanship of Scientific Research, King Abdulaziz University, Jeddah, project no. 514/130/1432.}, S. K. Jain and Andr\'{e} Leroy} }
\vspace{6pt}

\maketitle\markboth{\rm  A. Alahmadi S.K. Jain and A. Leroy}{ \rm ADS modules}

\begin{abstract}
We study the class of $ADS$ rings and modules introduced by Fuchs
 \cite{F}.  We give some connections between this notion and 
classical notions such as injectivity and quasi-continuity.
A simple ring $R$ such that $R_R$ is ADS must be either right 
self-injective or indecomposable as a right $R$-module.  
Under certain conditions we can construct a unique ADS hull up to isomorphism.
We introduce the concept of completely ADS 
modules and characterize completely ADS semiperfect right modules
as direct sum of semisimple and local modules.
\end{abstract}

\section{INTRODUCTION}

The purpose of this note is to study the class of $ADS$ rings and modules.
Fuchs \cite{F} calls a right module $M$ right $ADS$ if for every
decomposition $M=S\oplus T$ of $M$ and every complement $T^{\prime }$ of $S$
we have $M=S\oplus T^{\prime }$. Clearly any ring in which idempotents are
central (in particular commutative rings or reduced rings) 
has the property that $R_{R}$ is $ADS$. 
Moreover, if $R$ is commutative then every cyclic $R$-module is $ADS$.
We note that every right quasi-continuous module (also known as $\pi$-injective module) is right $ADS$, but not conversely. However, a right ADS 
module which is also CS is quasi-continuous. We provide equivalent
conditions for a module to be $ADS$. A module need not have an $ADS$\ hull in
the usual sense but we show that, under some hypotheses, every nonsingular right module possesses a right $%
ADS$ hull which is unique up to isomorphism. We call a right module $M$ completely $ADS$ if each of its subfactors is $ADS$. We characterize
completely $ADS$ semiperfect right modules as direct sums of semisimple and
local modules. In particular we give an alternative proof of the characterizations of 
semiperfect $\pi c$-rings (rings whose cyclics are quasi-continuous).

\section{Definitions and Notations}

\bigskip Throughout every module will be a right module unless otherwise stated. All
rings have identity and all modules are unital. A module $M$ is called \textit{%
continuous} if it satisfies (C1): each complement in $M$ is a direct summand, and (C2):
if a submodule $N$ of $M$ is isomorphic to a direct summand of $M$ then $N$
itself is a direct summand of $M$. A module $M$ is called \textit{quasi-continuous} ($%
\pi $-injective) if it satisfies (C1) and (C3): the sum of two
direct summands of $M$ with zero intersection is again a direct summand of $M$.  Equivalently 
a module $M$ is quasi-continuous if and only if every projection $\pi_i:N_1\oplus N_2 \longrightarrow N_i$, where $N_i$ ($i=1,2$) are 
submodules of $M$, can be extended to $M$. 

For two modules $A$ and $B$, we say that $A$\textit{\ is }$B$\textit{%
-injective} if any homomorphism from a submodule $C$ of $B$ to $A$ can be extended to a homomorphism 
from $B$ to $A$. 
We note that if $A$ is 
$B$-injective and $A$ is contained in $B$ then $A$ is a direct summand of $B$.
A module $M$ is called \textit{semiperfect} if each of its homomorphic
images has a projective cover. A submodule $N$ of a module $M$ is \textit{%
small} in $M$ if for any proper submodule $P$ of $M$, $P+N\neq M$. We will
write $N<<M$.  
Let $A$ and $P$ be submodules of a module $M$.  Then $P$ is called a supplement of $A$ if it
is minimal with the property $A + P = M$. 



A module $M$ is \textit{discrete} if it satisfies (D$_{1}$): for every
submodule $A$ of $M$ there exists a decomposition $M=M_{1}\oplus M_{2}$ such
that $M_{1}\subset A$ and $M_{2}\cap A$ is small in $M$, and (D$_{2}$): if $A $ is a submodule of 
$M$ such that $M/A$ is isomorphic to a direct
summand of $M$, then $A$ is a direct summand of $M$. A module $M$ is called 
\textit{quasi-discrete} if it satisfies D$_{1}$ and D$_3$: if $M_1$ 
and $M_2$ are summands of $M$ and $M=M_1+M_2$ then $M_1\cap M_2$ is a 
summand of $M$.

\bigskip For any module $M$, $E(M)$ denotes the injective hull of $M$.  We
 recall a useful result of Azumaya that for any two modules $M$ and $N$,
if $M$ is $N$-injective then for any $R$-homomorphism $\sigma :E(N) \rightarrow E(M)$, 
$\sigma (N)\subseteq M$.

\bigskip 

\section{ PROPERTIES OF ADS MODULES}

\bigskip We begin with a lemma which is useful in checking the ADS property
of a module.  This was proved by  Burgess and Raphael \cite{BR}, however, for the sake of
completeness, we provide the proof.

\begin{lemma}
\label{ADS iff relative injectivity} An $R$-module $M$ is ADS if and only
if for each decomposition $M=A\oplus B$, $A$ and $B$ are mutually injective.
\end{lemma}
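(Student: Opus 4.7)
\medskip

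\noindent\textbf{Proof plan.} The statement is an equivalence, so I would prove both directions separately, with the standard ``diagonal submodule'' trick linking the $ADS$ property to relative injectivity.

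For the forward implication, assume $M$ is $ADS$, fix a decomposition $M=A\oplus B$, and aim to show that $A$ is $B$-injective (the other direction is symmetric). Given a submodule $C\subseteq B$ and a homomorphism $f\colon C\to A$, I would form the graph-type submodule
\[
 N=\{\,c-f(c)\ :\ c\in C\,\}\subseteq M.
\]
A direct check shows $N\cap A=0$, since any element lying in $A$ forces the corresponding $c\in B\cap A=0$. By Zorn's lemma enlarge $N$ to a submodule $T'$ of $M$ which is a complement of $A$ (maximal with $T'\cap A=0$). The $ADS$ hypothesis then yields $M=A\oplus T'$; writing $\pi\colon M\to A$ for the associated projection, the restriction $\pi|_B\colon B\to A$ is the desired extension of $f$, because for $c\in C$ we have $c=(c-f(c))+f(c)$ with $c-f(c)\in N\subseteq T'$ and $f(c)\in A$.

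For the converse, assume mutual injectivity of every complementary pair of summands. Take a decomposition $M=S\oplus T$ and a complement $T'$ of $S$ in $M$, and let $p\colon M\to T$ be the projection along $S$. The restriction $p|_{T'}\colon T'\to T$ is injective (its kernel is $T'\cap S=0$), so I can define a map $\phi\colon p(T')\to S$ by sending $p(t')$ to the unique $S$-component $t'-p(t')$ of $t'\in T'$. Since $S$ is $T$-injective by hypothesis, $\phi$ extends to $\tilde\phi\colon T\to S$. Now set
\[
 T''=\{\,\tilde\phi(t)+t\ :\ t\in T\,\}.
\]
Routine verifications (writing any $m=s+t$ as $(s-\tilde\phi(t))+(\tilde\phi(t)+t)$, and checking $S\cap T''=0$) give $M=S\oplus T''$, and by construction $T'\subseteq T''$. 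Maximality of $T'$ as a complement of $S$ then forces $T'=T''$, so $M=S\oplus T'$, proving $M$ is $ADS$.

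The only non-formal step is constructing the extending map in each direction; both reduce to the same device of replacing a complement by the graph of a suitable homomorphism, so the argument is essentially symmetric. I do not anticipate a real obstacle: the forward direction relies on Zorn's lemma to secure a complement containing the graph of $f$, and the backward direction relies only on the hypothesised injectivity to extend $\phi$ from $p(T')\subseteq T$ to all of $T$. No properties of $M$ beyond those in the definition are needed, which is consistent with the result being folklore attributed to Burgess and Raphael.
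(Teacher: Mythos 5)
Your proof is correct and follows essentially the same route as the paper's: the forward direction uses the graph $\{c-f(c)\}$, a complement of $A$ containing it, and the resulting projection, while the converse extends the partial projection from $p(T')=T\cap(S\oplus T')$ to $T$ and uses maximality of the complement $T'$ (your graph submodule $T''$ is just a cleaner packaging of the paper's argument that $b-g(b)\in C$ for all $b\in B$). No gaps.
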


\begin{proof}
Suppose $M$ is $ADS.$ We prove $A$ is $B$-injective. Let $C$ be a submodule
of $B$ and let $f:C\rightarrow A$ be an $R$-$\hom $omorphism.   Set $X$=\{$%
c+f(c)\mid $ $c\in C\}$. Then $X\cap A=0$.  So $X$ is contained in a
complement, say $K,$ of $A$.  Then by hypothesis, $M=A\oplus K$.  The trick is to
define an $R$ -homomorphism $g:B\rightarrow A$ which is a
composition of the projection $\pi _{K}:M\rightarrow K$ along $A$ followed by
the projection $\pi _{A}:M\rightarrow A$ along $B$ and restricting to $B.$ By
writing an element $c\in C$ as $c=(c+f(c))-f(c)$, we see that $\pi _{A}\pi
_{K}=f$ on $C$ and hence $\pi _{A}\pi _{K}$ is an extension of $f.$

\noindent Conversely, suppose for each decomposition  $M=A\oplus B$, $A$
and $B$ are mutually injective. Let $C$ be a complement of $A$.  Set $U=B\cap
(A\oplus C)$ which is nonzero because $A\oplus C$ is essential in $M$.  Let  
$\pi _{A}$ be the projection of $A\oplus C$ on to $A$ and $f:U\rightarrow A$
be the restriction of  $\pi _{A}$ to $U$.  This can be extended \ to $%
g:B\rightarrow A,$ by assumption.  Let $b\in B$ and let $D=(b-g(b))R+C.$ We
claim D$\cap A=0$. Let $a\in A$ and let $a=br-g(b)r+c$ for some $c\in C$.
This gives $br=a+g(b)r-c\in U$ and so $f(br)=a+g(b)r$ because $f$ is
the identity on $A$ and $0$ on $C$.  This yields $a=0$, proving our claim.  Thus $D=C$
and hence $b-g(b)\in C$ for all $b\in B$.  Therefore, $b=(b-g(b))+g(b)\in
C\oplus A$ and so $M=A\oplus B\sqsubseteq C\oplus A,$ proving that $%
M=C\oplus A.$\end{proof}

Our next proposition gives equivalent statements as to when a module is ADS
analogous to characterization of quasi-continuous modules (Cf. \cite{GJ}). 

\begin{proposition}
\label{equivalent conditions for ADS} For an $R$-module $M$ the following are
equivalent:

\begin{enumerate}
\item[(i)] $M$ is ADS.

\item[(ii)] For any direct summand $S_{1}$ and a submodule $S_{2}$ having
zero intersection with $S_{1}$, the projection map $\pi _{i}:S_{1}\oplus
S_{2}\longrightarrow S_{i}$ ($i=1$, $2$) can be extended to an endomorphism
(indeed a projection) of $M$.

\item[(iii)] If $M=M_{1}\oplus M_{2}$ then $M_{1}$ and $M_{2}$ are mutually
injective.

\item[(iv)] For any decomposition $M=A\oplus B$, the projection $\pi
_{B}:M\longrightarrow B$ is an isomorphism when it is restricted to any
complement $C$ of $A$ in $M$,

\item[(v)] For any decomposition $M=A\oplus B$ and any $b\in B$, $A$
is $bR$-injective,

\item[(vi)] For any direct summand $A\subseteq^{\oplus} M$ and any $c\in M$ such that
$A\cap cR=0$, $A$ is $cR$-injective.
\end{enumerate}
\end{proposition}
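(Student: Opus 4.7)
The plan is to use \lemref{ADS iff relative injectivity} as a pivot, which gives (i)$\Leftrightarrow$(iii) immediately, and then to connect each of the remaining conditions to this pair by short implications. Concretely, I will close up: (i)$\Leftrightarrow$(iv), (i)$\Rightarrow$(ii)$\Rightarrow$(iii), (iii)$\Rightarrow$(v), (v)$\Leftrightarrow$(vi), and (vi)$\Rightarrow$(iii).

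For (i)$\Leftrightarrow$(iv), I would argue directly: if $M = A \oplus B$ and $C$ is a complement of $A$, then ADS gives $M = A \oplus C$, so $\pi_B|_C$ has zero kernel (from $A \cap C = 0$) and is surjective (decompose each $b \in B$ in $A \oplus C$). Conversely, if $\pi_B|_C$ is an isomorphism, writing $m = a + b$ and $b = \pi_B(c)$ with $c = a' + b$ gives $m = (a - a') + c \in A \oplus C$. For (i)$\Rightarrow$(ii), given a summand $S_1$ and a submodule $S_2$ with $S_1 \cap S_2 = 0$, Zorn's lemma produces a complement $C$ of $S_1$ containing $S_2$; ADS yields $M = S_1 \oplus C$, and the idempotents projecting onto $S_1$ along $C$ and onto $C$ along $S_1$ extend $\pi_1$ and $\pi_2$ (the latter because $S_2 \subseteq C$). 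For the subtler direction (ii)$\Rightarrow$(iii), given $f : D \to A$ with $D \subseteq B$ and $M = A \oplus B$, I would form $X = \{d - f(d) : d \in D\}$, observe $X \cap A = 0$, and apply (ii) to $(A, X)$ to produce an idempotent $p$ with $p|_A = \mathrm{id}$ and $p|_X = 0$. Then $(\pi_A \circ p)|_B : B \to A$ extends $f$, since for $d \in D$ one has $d = f(d) + (d - f(d))$ and hence $p(d) = f(d)$; by symmetry $B$ is $A$-injective, so (iii) holds.

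For the injectivity-type conditions, (iii)$\Rightarrow$(v) is immediate since $B$-injectivity passes to the cyclic submodule $bR$. The equivalence (v)$\Leftrightarrow$(vi) reduces the case of arbitrary $c \in M$ with $cR \cap A = 0$ to the case $c \in B$: write $c = a + b$ in $M = A \oplus B$; then $\phi : bR \to cR$, $br \mapsto cr$, is well-defined (if $br = 0$ then $cr = ar \in cR \cap A = 0$) and an isomorphism, through which $bR$-injectivity of $A$ transfers to $cR$-injectivity. Finally, (vi)$\Rightarrow$(iii) is a Zorn's lemma patching argument: take a maximal extension $(D_0, f_0)$ of $f$ in the poset of partial extensions $D \subseteq D' \subseteq B$; if $D_0 \subsetneq B$, pick $b \in B \setminus D_0$, use (vi) to extend $f_0|_{D_0 \cap bR}$ to $h : bR \to A$, and glue $f_0$ and $h$ to a map $\tilde f : D_0 + bR \to A$, contradicting maximality.

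The two steps I expect to require the most care are (ii)$\Rightarrow$(iii) and (vi)$\Rightarrow$(iii). In the first, the correct auxiliary submodule is the graph-like $X = \{d - f(d)\}$, exactly the device used in \lemref{ADS iff relative injectivity}, so the template is available. In the second, well-definedness of the glued map rests on the fact that $h$ was chosen to extend $f_0|_{D_0 \cap bR}$: if $d_0 + br = d_0' + br'$ then $d_0 - d_0' = b(r' - r) \in D_0 \cap bR$, and $f_0(d_0) - f_0(d_0') = h(br') - h(br)$ follows from the agreement on the intersection.
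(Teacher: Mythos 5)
Your proof is correct, and all six conditions are genuinely linked (the cycles (i)$\Leftrightarrow$(iii), (i)$\Rightarrow$(ii)$\Rightarrow$(iii), (iii)$\Rightarrow$(v)$\Rightarrow$(vi)$\Rightarrow$(iii), (i)$\Leftrightarrow$(iv) do close up). It shares the paper's skeleton of pivoting on Lemma~\ref{ADS iff relative injectivity}, but it departs from the paper's argument at two genuine points. First, the paper proves (ii)$\Rightarrow$(i) directly: it extends the projection $A\oplus C\to C$ to $f\in End(M)$ and then argues, via essentiality of $A\oplus C$ and closedness of $C$ and $A$, that $f(M)\subseteq C$, $f^{2}=f$ and $\ker f=A$, whence $M=A\oplus C$. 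You instead prove (ii)$\Rightarrow$(iii) by applying (ii) to the pair $(A,X)$ with $X=\{d-f(d)\}$, reusing the graph device from Lemma~\ref{ADS iff relative injectivity}; this is shorter and sidesteps the delicate step in the paper where $f(m)E\subseteq C$ for an essential right ideal $E$ is used to force $f(m)\in C$ (a step that is cleanest when singular elements are absent), so your route is arguably more robust. Second, for the cyclic conditions the paper simply cites Proposition 1.4 of \cite{MM} (equivalence of $B$-injectivity with $bR$-injectivity for all $b\in B$) and, for (i)$\Rightarrow$(vi), passes through a complement of $A$ containing $cR$; you make everything self-contained: (iii)$\Rightarrow$(v) by restriction, (v)$\Rightarrow$(vi) by the observation that $br\mapsto cr$ is a well-defined isomorphism $bR\cong cR$ when $cR\cap A=0$ (using $A\cap B=0$ for injectivity), and (vi)$\Rightarrow$(iii) by the Zorn's lemma patching argument, which is in effect a proof of the cited result. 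The paper's treatment is shorter because it outsources this reduction; yours is more elementary and closes every implication without external references.
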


\begin{proof}
(i)$\Rightarrow $ (ii) Let ${\hat{S}_{2}}$ be \ a complement of $S_{1}$
containing $S_{2}$. Then by definition of ADS module, $M=S_{1}\oplus {\hat{S}%
_{2}}$. Hence the canonical projections ${\hat{\pi _{1}}}:S_{1}\oplus {\hat{{%
S}_{2}}}\longrightarrow S_{1}$ and ${\hat{\pi _{2}}}:S_{1}\oplus {\hat{S}_{2}%
}\longrightarrow {\hat{{S}_{2}}}$ are clearly extensions of $\pi _{1}$ and $%
\pi _{2}$. 

\noindent (ii)$\Rightarrow $ (i) Let $M=A\oplus B$ and let $C$ be a
complement of $A$ in $M$. We must show that $M=A\oplus C$. By hypothesis,
the projection $\pi :A\oplus C\longrightarrow C$ can be extended to an
endomorphism $f:$ $M\longrightarrow M$. We claim $f(M)\subset C$. Since $%
A\oplus C$ is essential in $M$, for any $0\neq m\in M$, there exists an
essential right ideal $E$ of $R$ such that $0\neq mE\subset A\oplus C$. This
gives $f(m)E=\pi (mE)\subset C$. Since $C$ is closed in $M$, this yields $%
f(m)\in C$, proving our claim. We also remark that $f^{2}=f$, $%
M=Ker(f)\oplus im(f)$ and $Ker(f)=\{m-f(m)\;|\;m\in M\}$. We now show that 
$Ker(f)=A$. For any $a\in A$, clearly $a=a-f(a)\in Ker(f)$, hence $A\subset
Ker(f)$. Now let $0\neq $ $m-f(m)\in Ker(f)$. There exists $r\in R$ such
that $0\neq (m-f(m))r\in A\oplus C$. This implies $%
f[(m-f(m))r]=f(mr)-f(f(mr))=f(mr)-f(mr)=0$. Since $f$ extends $\pi $, this
means that $0\neq (m-f(m))r\in Ker(\pi )=A.$ But $A$ being closed in $M$, we
conclude $A=Ker(f)$, completing the proof. 

\noindent (i)$\Leftrightarrow $(iii) This
is Lemma \ref{ADS iff relative injectivity} above. 

\noindent (i)$\Leftrightarrow $(iv) Let $C$ be a complement of $A$. 
Then $\ker (\pi _{B}|_{C})=0$. Since 
$A\oplus C=(A\oplus C)\cap (A\oplus B)=((A\oplus C)\cap B)+A$, 
we have $\pi_{B}(C)=\pi _{B}(A\oplus C)=\pi _{B}((A\oplus C)\cap B)=(A\oplus C)\cap B$.
This gives $\pi _{B}(C)=B$ when $M$ is ADS. On the other hand if $\pi
_{B}(C)=B$ then $M=A\oplus C$, hence $M$ is ADS.

\noindent (i)$\Leftrightarrow $(v) This is classical (Cf. Proposition 1.4 in \cite{MM}).

\noindent (i)$\Rightarrow $(vi) Consider $C$ a complement of $A$ containing $cR$.
Since $M$ is ADS we have $M=A\oplus C$.  Using $(v)$, this leads to $A$ being $cR$-injective. 

\noindent (vi)$\Rightarrow $(i) This is clear since if $M=A\oplus B$, $(vi)$ implies that $A$ is $bR$-injective for all $b\in B$ and Proposition 1.4 in \cite{MM} yields that $A$ is $B$-injective.
\end{proof}

Let us mention the following necessary condition for a module to be ADS.

\begin{corollary}
Let $M_R$ be an ADS module.  For any direct summand $A\subseteq^{\oplus} M$ and any 
$(a,c,r)\in A\times M \times R$ such that $cR\cap A=0$ and $ann(cr) \subseteq r.ann(a)$ 
there exists $a'\in A$ such that $a=a'r$.
If $R$ is a right PID the converse is true.
\end{corollary}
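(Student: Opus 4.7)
My plan is to derive both implications from characterization (vi) of Proposition~\ref{equivalent conditions for ADS}. For the forward direction, starting from an ADS module $M$ and a triple $(a,c,r)$ satisfying the stated conditions, I would define $\varphi:crR\to A$ by $\varphi(cr\cdot s)=as$, using the containment $ann(cr)\subseteq r.ann(a)$ to check well-definedness. Since $A$ is a direct summand with $A\cap cR=0$, characterization (vi) guarantees that $A$ is $cR$-injective, so $\varphi$ extends to some $\tilde\varphi:cR\to A$; then $a':=\tilde\varphi(c)$ is the element sought, because $a'r=\tilde\varphi(cr)=\varphi(cr)=a$.

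For the converse, assume $R$ is a right PID and that the displayed condition holds. By (vi) it is enough to prove that, for every direct summand $A$ of $M$ and every $c\in M$ with $A\cap cR=0$, $A$ is $cR$-injective. I would take an arbitrary homomorphism $f:N\to A$ with $N\subseteq cR$; since $cR\cong R/ann(c)$ and right ideals of $R$ are principal, every submodule of $cR$ is cyclic, so $N=crR$ for some $r\in R$, and setting $a=f(cr)$ one automatically has $ann(cr)\subseteq r.ann(a)$ because $f$ is $R$-linear.

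The main obstacle is producing a \emph{well-defined} extension $\tilde f:cR\to A$: the naive rule $\tilde f(ct):=a't$, with $a'$ supplied by the hypothesis via $a'r=a$, requires in addition that $ann(c)\cdot a'=0$, which the hypothesis does not supply directly. My remedy is to first normalize the generator of $N$ using the right Bezout property of a right PID. Writing $ann(c)=eR$ and choosing $d\in R$ with $dR=rR+eR$, one gets an identity $d=ru+ev$ together with factorizations $r=dr_1$ and $e=de_1$. Because $ce=0$, the element $cd=cru$ lies in $crR$, hence $N=crR=cdR$, and a direct calculation in the domain $R$ gives $ann(cd)=e_1R$. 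Now apply the hypothesis to the triple $(f(cd),c,d)$: the required containment $ann(cd)\subseteq r.ann(f(cd))$ follows from $R$-linearity of $f$, producing $a''\in A$ with $a''d=f(cd)$. The key computation is $a''e=(a''d)e_1=f(cd)\,e_1=0$ (the last equality because $e_1\in ann(cd)$), and this ensures that $\tilde f(ct):=a''t$ is well-defined on all of $cR$. That $\tilde f$ extends $f$ is then routine: $\tilde f(cr)=a''r=a''dr_1=f(cd)r_1=f(cr)$. Invoking (vi) once more concludes that $M$ is ADS.
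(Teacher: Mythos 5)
Your proposal is correct and follows the paper's own route: both implications are obtained from Proposition~\ref{equivalent conditions for ADS}(vi), and your forward direction is essentially verbatim the paper's argument. For the converse the paper simply asserts that the stated condition "makes it possible to extend any map in $Hom_R(crR,A)$ to $Hom_R(cR,A)$," whereas you correctly observe that the naive extension $ct\mapsto a't$ need not be well defined (nothing forces $a'\cdot ann(c)=0$) and repair this by replacing the generator $cr$ of $N$ with $cd$, where $dR=rR+ann(c)$, so that the element $a''$ produced by the hypothesis automatically kills $ann(c)$; this fills a genuine gap that the paper's one-line justification glosses over, and the rest of your verification ($crR=cdR$, $\tilde f(cr)=f(cr)$) is sound.
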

\begin{proof}
By Proposition \ref{equivalent conditions for ADS}(vi), we know that $A$ is $cR$-injective.  
Consider $\varphi\in Hom_R(crR,A)$ defined by $\varphi(cr)=a$.  The condition on annihilators guarantees that $\varphi$ is well defined.  By relative injectivity, this map 
can be extended to $\overline{\varphi}:cR\longrightarrow A$, and hence we get $a=\varphi(cr)=\overline{\varphi}(c)r$.  We obtain the desired result by defining $a'=\overline{\varphi}(c)$.

\noindent If $R$ is a principal ideal domain then the submodules of $cR$ are of the form $crR$ for some 
$r\in R$.  The condition mentioned in the statement of the corollary makes it possible to extend any 
map in $Hom_R(crR,A)$ to a map in $Hom_R(cR,A)$ for any direct summand $A\subseteq ^\oplus M$.    
Invoking Proposition \ref{equivalent conditions for ADS}(vi), we can thus conclude that $M$ is ADS.
\end{proof}

\bigskip It is known that the sum of two closed submodules of a quasi-continuous 
module is closed \cite{GJ}. We prove that the
direct sum of two closed submodules of an ADS module is again closed when
one of them is a summand.

\begin{proposition}
Let $A$ and $B$ be two closed submodules of an ADS module $M$ such that $A$
is a summand and $A\cap B=0$. Then $A\oplus B$ is a closed submodule of $M$.
\end{proposition}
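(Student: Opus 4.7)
My plan is to exploit the ADS property to replace the ambient module $M$ by a convenient direct-sum decomposition in which $B$ sits inside a single summand, and then reduce the question to whether $B$ is closed in that summand.

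First, since $A\cap B=0$, Zorn's lemma gives a complement $C$ of $A$ in $M$ that contains $B$. Because $M$ is ADS and $A$ is a summand, the very definition of ADS gives $M=A\oplus C$. At this point I have rewritten the problem as: $A\oplus B$ sits inside $A\oplus C$ with $B\subseteq C$, and I want to show $A\oplus B$ is closed in $A\oplus C$.

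Next I would record the elementary fact (independent of the ADS hypothesis) that, given any direct decomposition $M=A\oplus C$ and any submodule $B\subseteq C$, the submodule $A\oplus B$ is closed in $M$ if and only if $B$ is closed in $C$. One direction is automatic: an essential extension of $B$ inside $C$ would, after adding $A$, give an essential extension of $A\oplus B$ inside $M$. For the other direction, suppose $A\oplus B\subseteq X\subseteq M$ with $A\oplus B$ essential in $X$; writing any $x\in X$ as $x=a+c$ with $a\in A\subseteq X$ forces $c=x-a\in X\cap C$, so $X=A\oplus(X\cap C)$, and then essentiality of $A\oplus B$ in $X$ transfers (using the uniqueness of the decomposition $A\oplus C$) to essentiality of $B$ in $X\cap C$.

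Finally I would verify that $B$ is closed in $C$: this is the easy observation that closedness restricts to submodules, because any proper essential extension of $B$ inside $C$ would be a proper essential extension of $B$ inside $M$, contradicting that $B$ is closed in $M$. Combining with the previous paragraph gives that $A\oplus B$ is closed in $M=A\oplus C$, as desired.

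The only place where the ADS hypothesis is genuinely used is in the very first step, to promote the complement $C$ of $A$ containing $B$ into an actual direct summand. The rest is routine bookkeeping about closed submodules inside a direct sum, so I do not expect any serious obstacle beyond being careful with the essentiality transfer in the second paragraph.
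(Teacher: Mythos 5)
Your proposal is correct and follows essentially the same route as the paper: use ADS to promote a complement $C$ of $A$ containing $B$ to a direct summand, so $M=A\oplus C$, and then check closedness of $A\oplus B$ by reducing to closedness of $B$ inside $C$. The only difference is cosmetic — you package the final verification as a general lemma about $A\oplus B$ inside $A\oplus C$, whereas the paper verifies it directly on an element of the closure.
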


\begin{proof}
Let $C$ be a complement of $A$ containing $B$. Since $M$ is ADS, we have $%
M=A\oplus C$. Let $x=a+c$ be in the closure of $A\oplus B$ in $M$, where $%
a\in A$ and $c\in C$. Since $a\in A\subseteq cl(A\oplus B)$, we have that $%
a\in cl(A\oplus B)$. Hence there exists an essential right ideal $E$ of $R$
such that $cE \subseteq (A\oplus B) \cap C=B$. The fact that $B$ is closed
implies $c \in B$. Hence $x\in A\oplus B$, as desired.
\end{proof}

\begin{remark}
Let $A,B$ be closed submodules of an ADS module $M$ such that $A$ is a direct
summand of $M$. If $A\cap B$ is a direct summand of $M$, then $A + B$ is
closed. Indeed let $K$ be a complement of $A\cap B$. Since $M$ is ADS we
have $M=(A\cap B)\oplus K$. Hence $A+B=A\oplus (K\cap B)$. The above
proposition then yields the result.
\end{remark}

The proposition that follows gives an interesting property of an ADS module.  The original statement is due to Gratzer and Schmidt
(cf. Theorem 9.6 in \cite{F}).  
We first prove the following lemma.

\begin{lemma}
\label{Decomposition of injective hull goes down} 
Let $M=B\oplus C$ be a decomposition of $M$ with projections $\beta :M\rightarrow B$, $\gamma :M\rightarrow C$. 
Then $M=B\oplus C_1$ if
and only if there exists $\theta \in End(M)$ such that $C_1=(\gamma -
\beta\theta\gamma)(M)$
\end{lemma}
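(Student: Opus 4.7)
The equivalence is clean enough that I would prove both directions directly, starting with the easier one to warm up.

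\medskip

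\noindent\textbf{Plan for the ``if'' direction.} Assuming $\theta\in\mathrm{End}(M)$ is given and $C_1=(\gamma-\beta\theta\gamma)(M)$, I would first verify $M=B+C_1$ by the identity
\[
m=\beta(m)+\gamma(m)=\bigl[\beta(m)+\beta\theta\gamma(m)\bigr]+\bigl[\gamma(m)-\beta\theta\gamma(m)\bigr],
\]
where the bracketed terms lie in $B$ and $C_1$ respectively. For $B\cap C_1=0$, I would take $x\in B\cap C_1$, write $x=\gamma(m)-\beta\theta\gamma(m)$, and observe that $x+\beta\theta\gamma(m)=\gamma(m)$ sits in $B\cap C=0$, forcing $\gamma(m)=0$ and hence $x=0$.

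\medskip

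\noindent\textbf{Plan for the ``only if'' direction.} Here the key is to produce the witness $\theta$. Given a second decomposition $M=B\oplus C_1$, let $\pi_B:M\to M$ denote the projection onto $B$ along $C_1$ (viewed as an endomorphism of $M$), and I would take $\theta:=\pi_B$. The computation is short: for any $m\in M$, $\gamma(m)\in C$, then $\theta\gamma(m)=\pi_B\gamma(m)\in B$, and $\beta$ fixes elements of $B$, so
\[
(\gamma-\beta\theta\gamma)(m)=\gamma(m)-\pi_B\gamma(m)=\pi_{C_1}\gamma(m)\in C_1,
\]
giving the inclusion $(\gamma-\beta\theta\gamma)(M)\subseteq C_1$. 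For the reverse inclusion, I would take $c_1\in C_1$ arbitrary, write it in the first decomposition as $c_1=b+c$ with $b\in B$ and $c\in C$, and then apply $\pi_{C_1}$ to this identity: since $\pi_{C_1}(b)=0$ and $\pi_{C_1}(c_1)=c_1$, we obtain $\pi_{C_1}(c)=c_1$, and because $c=\gamma(c)$ this says $(\gamma-\beta\theta\gamma)(c)=c_1$.

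\medskip

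\noindent\textbf{Expected obstacle.} There is no real obstacle here; the whole content is identifying the correct candidate for $\theta$. The natural guess $\theta=\pi_B$ works only because $\beta$ acts as the identity on $B$, so $\beta\theta\gamma$ collapses to $\pi_B\gamma$ and the difference $\gamma-\beta\theta\gamma$ becomes exactly the $C_1$-projection composed with $\gamma$. The only mildly subtle point to present carefully is the surjectivity $\pi_{C_1}\gamma\colon M\twoheadrightarrow C_1$, handled by the two-line argument above.
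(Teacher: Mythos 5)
Your proof is correct and takes essentially the same direct-verification route as the paper: an explicit witness $\theta$ in one direction and an elementary check of $M=B+C_1$ and $B\cap C_1=0$ in the other. The only cosmetic difference is your choice of witness $\theta=\pi_B$ (the projection onto $B$ along $C_1$) versus the paper's $\theta=\gamma-\gamma_1$; since $\beta\gamma=0$, these produce the same operator $\beta\theta\gamma$, so the two arguments coincide in substance.
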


\begin{proof}
Suppose that $M=B\oplus C_{1}$ with projections $\beta _{1}$ on $B$ and $%
\gamma_{1}$ on $C_{1}$. We will show that $\beta _{1}=\beta +\beta \theta
\gamma $ and $\gamma _{1}=\gamma -\beta \theta \gamma $ with $\theta =\gamma
-\gamma _{1}$. We have $B<\ker (\theta )$, so $\theta =\theta \beta +\theta
\gamma =\theta \gamma $.

\noindent If $m=b+c=b_{1}+c_{1}$, where $b,b_{1}\in B,c\in C,c_{1}\in C_{1}$.  
Then $\theta(m) =c-c_{1}=b_{1}-b\in B$. Thus $\beta \theta =\theta $. Hence $%
\gamma _{1}=\gamma -\theta =\gamma -\beta \theta \gamma $. Also $\beta
_{1}=1_{A}-\gamma _{1}=\beta +\gamma -\gamma _{1}=\beta +$ $\beta \theta
\gamma $.

Conversely, if $\beta _{1}$, $\gamma _{1}$ are defined as above, that is $%
\beta _{1}=\beta +\beta \theta \gamma $ and $\gamma _{1}=\gamma -\beta
\theta \gamma $ for any $\theta \in End(M)$, then $\beta _{1}+\gamma
_{1}=1_{A}$, $\beta _{1}^{2}=\beta _{1}$, $\gamma _{1}^{2}=\gamma _{1}$, $%
\beta _{1}\gamma _{1}=\gamma _{1}\beta _{1}=0$. Therefore, $M=\beta
_{1}M\oplus \gamma _{1}M$. Since $\beta_{1}(M)\subset B$ and $%
\beta_1(b)=\beta(b)=b$ for $b\in B$, we have $M=B\oplus (\gamma -
\beta\theta\gamma)(M)$, as required.
\end{proof}

Using the same notations as in the previous lemma we state 
the following corollary.

\begin{corollary}
\label{characterization of ADS following Fuchs}
A module $M$ is ADS if and only if for any decomposition
$M=B\oplus C$ the complements of $B$ in $M$ are all of the form 
$(\gamma-\beta\theta\gamma)(M)$ for some $\theta\in End(M)$.
\end{corollary}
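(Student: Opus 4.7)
The plan is to recognize that this corollary is essentially a direct translation of Lemma~\ref{Decomposition of injective hull goes down} in the language of the ADS property. The lemma already identifies, for a fixed decomposition $M=B\oplus C$ with projections $\beta,\gamma$, the collection of submodules $C_{1}$ satisfying $M=B\oplus C_{1}$ as exactly the family $\{(\gamma-\beta\theta\gamma)(M):\theta\in\mathrm{End}(M)\}$. So all I will need to do is rephrase the ADS condition to match this description.

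For the forward direction I would proceed as follows. Assume $M$ is ADS and fix an arbitrary decomposition $M=B\oplus C$ with associated projections $\beta$ onto $B$ and $\gamma$ onto $C$. Take any complement $C'$ of $B$ in $M$. By the very definition of an ADS module the decomposition $M=B\oplus C$ together with the complement $C'$ forces $M=B\oplus C'$. The lemma then supplies some $\theta\in\mathrm{End}(M)$ with $C'=(\gamma-\beta\theta\gamma)(M)$, which is exactly the claimed form.

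For the converse direction, suppose that for every decomposition $M=B\oplus C$ each complement of $B$ has the form $(\gamma-\beta\theta\gamma)(M)$ for some $\theta\in\mathrm{End}(M)$. To verify the ADS property, pick any decomposition $M=B\oplus C$ and any complement $C'$ of $B$. By hypothesis we obtain $\theta\in\mathrm{End}(M)$ with $C'=(\gamma-\beta\theta\gamma)(M)$. Now apply the converse implication of Lemma~\ref{Decomposition of injective hull goes down} to conclude that $M=B\oplus C'$, which is precisely what the ADS condition demands.

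I do not foresee any serious obstacle; the content is genuinely in the lemma, and the role of the corollary is to repackage it. The only small point to keep explicit is that the definition of ADS refers to arbitrary direct decompositions, so the quantifier ``for any decomposition $M=B\oplus C$'' must be carried through both directions of the argument, and the projections $\beta,\gamma$ must be understood as those associated with the chosen decomposition.
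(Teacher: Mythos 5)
Your proposal is correct and matches the paper's intent exactly: the paper states the corollary without proof as an immediate consequence of Lemma~\ref{Decomposition of injective hull goes down}, and your two-directional unpacking (ADS gives $M=B\oplus C'$ so the lemma supplies $\theta$; conversely the given form plus the lemma's converse gives $M=B\oplus C'$) is precisely that immediate derivation. No gaps.
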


\begin{proposition}
Let $M=B\oplus C$ be a decomposition of an ADS $R$-module $M$. Let $\beta $ and $%
\gamma $ be projections on $B$ and $C$ respectively. Then the intersection $%
D$ of all the complements of $B$ is the maximal fully invariant submodule of 
$M$ which has zero intersection with $B$.
\end{proposition}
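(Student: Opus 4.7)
The plan is to use Corollary \ref{characterization of ADS following Fuchs} to obtain a concrete intrinsic description of $D$, and then read the two required properties (full invariance together with $D\cap B=0$, and maximality among such submodules) directly off of that description.

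By the corollary, the complements of $B$ in $M$ are precisely the submodules of the form $C_\theta=(\gamma-\beta\theta\gamma)(M)$ as $\theta$ ranges over $End(M)$. A short computation identifies each $C_\theta$ as the solution set
$$C_\theta = \{x\in M : \beta(x)+\beta\theta\gamma(x)=0\};$$
one inclusion is obtained by applying $\beta$ to $(\gamma-\beta\theta\gamma)(m)$, and the reverse by observing that under the displayed condition, $(\gamma-\beta\theta\gamma)(x)=\gamma(x)+\beta(x)=x$. Intersecting over all $\theta$ — and specializing $\theta=0$ to force $\beta(x)=0$, i.e.\ $x\in C$, so that the general condition collapses to $\beta\theta(x)=0$ — yields the clean description
$$D = \{x\in M : \theta(x)\in C \text{ for all } \theta\in End(M)\}.$$

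From this, the required properties are immediate. Full invariance: if $x\in D$ and $\phi\in End(M)$, then for every $\theta$, $\theta(\phi(x))=(\theta\phi)(x)\in C$, so $\phi(x)\in D$ (in particular $\theta=1_M$ shows $\phi(x)\in C$). Also $D\cap B=0$ since $D\subseteq C$. For maximality, let $N$ be any fully invariant submodule of $M$ with $N\cap B=0$. For $n\in N$ and $\theta\in End(M)$, full invariance gives $\theta(n)\in N$, hence $\beta\theta(n)\in N\cap B=0$, so $\theta(n)\in C$; thus $n\in D$ and $N\subseteq D$.

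The main obstacle — really the only nonroutine step — is translating the image-form description $C_\theta=(\gamma-\beta\theta\gamma)(M)$ into the equational form above, which is what makes an arbitrary intersection over $End(M)$ tractable. Once the intrinsic characterization of $D$ is in hand, both the full invariance/$D\cap B=0$ verification and the maximality argument reduce to one-line manipulations that exploit precisely the interplay between full invariance of $N$ and the decomposition $M=B\oplus C$.
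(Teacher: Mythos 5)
Your proof is correct and follows essentially the same route as the paper: both arguments rest on the Gr\"atzer--Schmidt description of the complements of $B$ as the images $(\gamma-\beta\theta\gamma)(M)$, and the key computation (that membership in such a complement amounts to $\beta(x)+\beta\theta\gamma(x)=0$, whence $D=\{x: \theta(x)\in C \text{ for all }\theta\}$) is the same one the paper performs pointwise on elements of $D$. Your only addition is to package this as an explicit intrinsic characterization of $D$ before reading off full invariance and maximality, which is a harmless and slightly tidier organization of the identical ideas.
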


\begin{proof}
Let $\theta \in End(M)$. Then $C_{1}=(\gamma -\beta \theta \gamma )(M)$
is again a complement of $B$. For $c\in D$ we have $(\gamma -\beta \theta
\gamma )(c)=c$ and $\gamma c=c$, because \thinspace $c\in C_{1}\cap C$.
Hence $\beta \theta c=0$ and $\theta c\in C$. This holds for all complements $%
C$, so $\theta c\in D$, so $D$ is fully invariant in $M$ with $D\cap B=0$.
On the other hand, assume $X$ is fully invariant with $X\cap B=0$. Since $%
M=B\oplus C$, and $\pi _{B}(X)\subseteq X$ and $\pi_{C}(X)\subseteq X$, this
leads to $X=(X\cap B)\oplus (X\cap C)=X\cap C$. Hence $X<C$. Since $M$ is
ADS this holds for any complement of $B$ in $M$, and hence $X\subseteq D$.
\end{proof}

\bigskip

It is known that an indecomposable regular ring which is right
continuous is right self-injective (cf. Corollary 13.20 in \cite{G} ).  The following theorem
is a generalization of this result for simple rings without the
assumption of regularity.  We may add that an indecomposable two-sided continuous
regular ring is simple (cf. \cite{G} Corollary 13.26).

\bigskip

\begin{theorem}
Let $R$ be an ADS simple ring. Then either $R_{R}$ is indecomposable or $R$
is a right self-injective regular ring.
\end{theorem}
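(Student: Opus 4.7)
The plan is to argue by case analysis on the decomposability of $R_R$. If $R_R$ is indecomposable the conclusion holds, so I would assume the contrary and pick a nontrivial idempotent $e\in R$ with $R_R=eR\oplus (1-e)R$. The task then becomes to show that $R$ is both right self-injective and von Neumann regular, using only the ADS hypothesis and the simplicity of $R$.

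For the self-injectivity, I would apply \lemref{ADS iff relative injectivity} to conclude that $eR$ and $(1-e)R$ are mutually injective. Since $eR$ is trivially $eR$-injective and also $(1-e)R$-injective, the standard closure of relative injectivity under finite direct sums in the second argument gives that $eR$ is $R_R$-injective. Baer's criterion then upgrades this to genuine injectivity of $eR$, and the symmetric argument does the same for $(1-e)R$. A finite direct sum of injectives being injective, $R_R$ itself is injective, so $R$ is right self-injective.

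For regularity, I would invoke two standard facts about the simple ring $R$: first, $J(R)$ is a proper two-sided ideal, hence $J(R)=0$; second, a classical theorem of Utumi (cf.\ Goodearl, \emph{Von Neumann Regular Rings}) asserts that for any right self-injective ring $R$, the quotient $R/J(R)$ is von Neumann regular. These combine immediately to make $R=R/J(R)$ regular. The step to watch is the invocation of Utumi's external theorem; the remainder of the argument is a direct application of \lemref{ADS iff relative injectivity}, Baer's criterion, and the elementary stability of (relative) injectivity under finite direct sums, with no delicate calculation required.
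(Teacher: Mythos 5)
There is a genuine gap, and it sits at the pivot of your whole argument: the claim that ``$eR$ is trivially $eR$-injective.'' In the terminology of the paper, $A$ being $A$-injective means $A$ is quasi-injective, i.e.\ every homomorphism from a submodule of $A$ into $A$ extends to an endomorphism of $A$; this is a strong condition and is never automatic. \lemref{ADS iff relative injectivity} only gives you that $eR$ and $(1-e)R$ are injective relative to \emph{each other}; it says nothing about $eR$ relative to itself. Without that missing piece you cannot assemble $R_R$-injectivity of $eR$ and Baer's criterion does not get off the ground. That this gap cannot be patched by soft arguments is shown by $R=\mathbb{Z}\times\mathbb{Z}$: it is commutative, hence $R_R$ is ADS, and it is decomposable, yet it is not right self-injective. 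So any correct proof must use simplicity precisely at the point where you wrote ``trivially,'' whereas your proposal only invokes simplicity later, for the harmless facts $J(R)=0$ and $R/J(R)$ regular (that final Utumi step is fine, but it is not where the difficulty lies).

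The paper's route is quite different. Since a simple ring is right nonsingular, $E(R_R)$ is the maximal right quotient ring $Q$, which is regular and right self-injective. From $eR$ being $(1-e)R$-injective, the Azumaya lemma recalled in Section 2 gives $\sigma((1-e)R)\subseteq eR$ for every $\sigma\in Hom((1-e)Q,eQ)\cong eQ(1-e)$, i.e.\ $eQ(1-e)\cdot(1-e)R\subseteq eR$. Simplicity enters here: $R=R(1-e)R$ forces $Q(1-e)R=Q$, whence $eQ=eQ(1-e)R\subseteq eR$, so $eQ=eR$; symmetrically $(1-e)Q=(1-e)R$, and therefore $R=Q$ is right self-injective and regular in one stroke. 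If you want to keep your outline, the repair is to replace the ``trivially self-relative-injective'' step by this interaction between simplicity and the embedding $R\subseteq Q$.
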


\begin{proof}
Let $Q$ be the right maximal quotient of $R$ which is regular right 
self-injective.  Since $R$ is  right (left) nonsingular $E(R)=Q$.
Suppose $R$ is not right indecomposable and let $e$ be a nontrivial
idempotent. Then since $R$ is ADS $eR$ is $(1-e)R$-injective (cf.
Lemma \ref{ADS iff relative injectivity}).  Furthermore, 
since $Hom((1-e)Q,eQ)\cong eQ(1-e)$, 
$(eQ(1-e)(1-e)R\subseteq eR$.  Because $R$ is simple, $R=$ $%
R(1-e)R\subset Q(1-e)R$.  This yields, $1\in Q(1-e)R$ . Therefore $Q=Q(1-e)R$,
and so $eQ=eR$. Similarly $(1-e)Q=(1-e)R$ hence $R=Q$, i.e. $R$ is a right self-injective
regular ring.
\end{proof}

\begin{corollary}
A simple regular right continuous ring is right self-injective. 
\end{corollary}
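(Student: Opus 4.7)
The plan is to reduce the corollary directly to the theorem just proved, together with the result of Goodearl (Corollary 13.20 in \cite{G}) cited immediately before the theorem. The chain of implications continuous $\Rightarrow$ quasi-continuous $\Rightarrow$ ADS is recorded in the introduction, so any simple regular right continuous ring $R$ automatically satisfies the hypotheses of the preceding theorem.

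First I would observe that since $R$ is right continuous, it is in particular right ADS, and thus the dichotomy of the theorem applies: either $R_R$ is indecomposable, or $R$ is right self-injective regular. In the second case there is nothing to prove. So the only real content of the corollary is to handle the indecomposable case, and for this I would simply quote the classical fact (Goodearl's Corollary 13.20) that an indecomposable regular right continuous ring is right self-injective. Since $R$ is assumed regular and right continuous, this applies verbatim and yields that $R$ is right self-injective in that case as well.

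There is no serious obstacle here; the proof is a two-line combination of the theorem and the cited classical result. The only thing worth writing carefully is the observation that continuity entails the ADS property, so that the preceding theorem can be invoked.

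\begin{proof}
Every right continuous module is quasi-continuous, hence ADS, so $R_R$ is ADS. By the previous theorem, either $R_R$ is indecomposable or $R$ is right self-injective regular. In the latter case we are done. In the former case, an indecomposable regular right continuous ring is right self-injective by Corollary 13.20 of \cite{G}. In either case $R$ is right self-injective.
\end{proof}
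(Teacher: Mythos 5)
Your proof is correct and follows exactly the argument the paper intends (the corollary is left without an explicit proof, but it is the immediate combination of the preceding theorem with Goodearl's Corollary 13.20, which the authors cite just before the theorem for precisely this purpose). The chain continuous $\Rightarrow$ quasi-continuous $\Rightarrow$ ADS and the two-case dichotomy are handled as the paper expects.
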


\bigskip 

\section{ADS HULLS}

\bigskip We now proceed to construct an $ADS$\ hull of a nonsingular module.
Burgess and Raphael (cf. \cite{BR}) claimed that an example can be constructed of a finite
dimensional module over a finite dimensional algebra which has no $ADS$
hull. We show that, under some circumstances, such an ADS hull does exist.

\begin{lemma}
Suppose $M$ is nonsingular. Then $M$ is ADS iff for every decomposition $%
E(M)=E_{1}\oplus E_{2}$ where $E_{1}\cap M$ is a direct summand of $M$, then $%
M=(E_{1}\cap M)\oplus (E_{2}\cap M)$.
\end{lemma}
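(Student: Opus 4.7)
My plan is to treat each implication separately, relying on two standard properties of a nonsingular module $M$: every direct summand and every complement submodule is closed in $M$; and for any submodule $N\subseteq M$ the closure of $N$ in $M$ coincides with $E(N)\cap M$, where $E(N)$ is realised inside $E(M)$.  As an easy consequence, $E(M)$ itself is nonsingular (an element killed by an essential right ideal must be zero).

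For $(\Rightarrow)$, assume $M$ is ADS and fix a decomposition $E(M)=E_1\oplus E_2$ with $A:=E_1\cap M$ a direct summand of $M$.  First I would note that $(E_2\cap M)\cap A\subseteq E_1\cap E_2=0$, so by Zorn's lemma $E_2\cap M$ sits inside some complement $C$ of $A$ in $M$.  The ADS property then delivers $M=A\oplus C$, and the task reduces to showing $C\subseteq E_2\cap M$.  For $c\in C$ I write $c=e_1+e_2$ with $e_i\in E_i$ and choose an essential right ideal $I$ of $R$ such that $e_1I\subseteq M$, whence $e_1I\subseteq E_1\cap M=A$.  For each $r\in I$ the identity $cr=e_1r+e_2r$ exhibits a decomposition with $e_1r\in A$ and $e_2r=cr-e_1r\in E_2\cap M\subseteq C$; since $cr\in C$ and $A\cap C=0$, uniqueness forces $e_1r=0$.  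Nonsingularity of $E(M)$ now yields $e_1=0$, so $c=e_2\in E_2\cap M$.

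For $(\Leftarrow)$, let $M=A\oplus B$ and let $C$ be any complement of $A$ in $M$; I must prove $M=A\oplus C$.  Since $A\oplus C$ is essential in $M$ and $M$ is essential in $E(M)$, the (finite) direct sum $E(A)\oplus E(C)$ is injective and essential in $E(M)$, hence equal to $E(M)$.  Apply the hypothesis with $E_1:=E(A)$ and $E_2:=E(C)$: because $A$ (a direct summand) and $C$ (a complement) are both closed in $M$, the closure identification gives $E(A)\cap M=A$ and $E(C)\cap M=C$.  The hypothesis then yields $M=(E(A)\cap M)\oplus(E(C)\cap M)=A\oplus C$, as required.

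The main obstacle is the inclusion $C\subseteq E_2\cap M$ in the forward direction; this is the only step that genuinely uses nonsingularity, via the implication that an element of $E(M)$ killed by an essential right ideal must vanish.  The converse is by comparison routine, once one realises that the right decomposition of $E(M)$ to plug into the hypothesis is $E(A)\oplus E(C)$ rather than the more obvious $E(A)\oplus E(B)$.
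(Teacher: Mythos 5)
Your proof is correct, and the forward implication takes a genuinely different route from the paper's. The paper proves $(\Rightarrow)$ by taking the two projections $e_i:(E_1\cap M)\oplus(E_2\cap M)\to E_i\cap M$, extending them first to $M$ via Proposition \ref{equivalent conditions for ADS}(ii) and then to idempotents $e_i^{**}$ of $End(E(M))$, and using nonsingularity twice: once to see that $e_i^{**}$ is idempotent, and once to conclude that $e_i^{**}$ agrees with the canonical projection $\pi_i$ on all of $E(M)$ because they agree on the essential submodule $(E_1\cap M)\oplus(E_2\cap M)$; the decomposition of $M$ then falls out of $\pi_i(M)\subseteq M$. You instead enlarge $E_2\cap M$ to a complement $C$ of $A=E_1\cap M$, invoke ADS to get $M=A\oplus C$, and then show $C=E_2\cap M$ by the elementwise computation $cr=e_1r+e_2r$ with $e_1r\in A\cap C=0$, using nonsingularity of $E(M)$ only once, to kill $e_1$. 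Your argument is more elementary (no extension of idempotents to the injective hull) and still recovers the paper's intermediate fact that $\pi_i(M)\subseteq M$ as an afterthought; the paper's version has the advantage of setting up the idempotent-extension machinery that is reused in Theorem \ref{Characterization of ADS via idempotents}. Your converse is essentially identical to the paper's, including the key choice of the decomposition $E(M)=E(A)\oplus E(C)$ built from the complement $C$ rather than from the given summand $B$.
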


\begin{proof}
Suppose $M$ is ADS. We may write $M=(E_{1}\cap M)\oplus K$ where $K$ is a
complement of $E_{1}\cap M$. Let $e_{i}:(E_{1}\cap M)\oplus (E_{2}\cap
M)\longrightarrow E_{i}\cap M$ be the projection map.  Then by Proposition \ref{equivalent 
conditions for ADS}(ii) there exists $e_{i}^{\ast
}:M\longrightarrow M$ that extends $e_{i}$. Let $\pi _{i}:E_{1}\oplus
E_{2}\longrightarrow E_{i}$ be the natural projection. Since $E(M)$ is
injective we can further extends $e_{i}\ast $ to $e_{i}^{\ast \ast }\in
End(E(M))$. We claim $e_{i}^{\ast \ast }$ is an idempotent in $End(E(M)$.
Indeed let $x\neq 0$ be any element in $E(M)$ and $A$ an essential right
ideal of $R$ such that $0\neq xA\subseteq M$.  We have $(e_{i}^{\ast \ast
})^{2}(x)A=(e_{i}^{\ast \ast })^{2}(xA)=(e_{i}^{\ast })^{2}(xA))=e_{i}^{\ast
}(xA)=e_{i}^{\ast \ast }(xA)=e_{i}^{\ast \ast }(x)A$. This yields the claim,
since $M$ is nonsingular. Thus
 $e_{i}^{\ast \ast }(E(M))=\pi
_{i}(E(M))=E_{i}$. Now $M\subseteq _{e}E(M)=E_{1}\oplus E_{2}$ implies $%
E_{1}\cap M\subseteq _{e}(E_{1}\oplus E_{2})\cap E_{1}$. Similarly $%
E_{2}\cap M\subseteq _{e}E_{2}$ and so $e_{i}^{\ast \ast }=\pi _{i}$ on $%
E_{1}\cap M\oplus E_{2}\cap M\subseteq _{e}M\subseteq _{e}E(M)$. Since $M$
is nonsingular $e_{i}^{\ast \ast }=\pi _{i}$ on $E(M)$. In particular, $\pi
_{i}(M)\subseteq M$ and so $M=(\pi _{1}+\pi _{2})(M)\subseteq \pi _{1}(M)\oplus
\pi _{2}(M)\subset (E_{1}\cap M)\oplus (E_{2}\cap M)$.

Conversely, let $M=A\oplus B$ and $C$ be a complement of $A$.  We must show that
$M=A\oplus C$.  Since $A\oplus C <_e M$, we get $E(M)=E(A)\oplus E(C)$.  
Since both $A$ and $C$ are closed in $M$, we have $E(A)\cap M=A$ and $E(C)\cap M=C$.
 Since $A$ is a direct summand of $M$ we have, thanks to the hypothesis, 
$M=(E(A)\cap M)\oplus (E(C)\cap M)=A\oplus C$, as desired.
\end{proof}

\bigskip 

\begin{theorem}
\label{Characterization of ADS via idempotents} Let $M$ be a right 
$R$-module.  Then $M$ is ADS if and only if for every $e=e^2$, $f=f^2\in End(E(M))$
with $eM\subset M$ and $fE(M)= eE(M)$, we have $fM\subset M$.
\end{theorem}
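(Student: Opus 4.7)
The plan is to reduce both directions to \lemref{ADS iff relative injectivity} (which characterises ADS by mutual relative injectivity of summands) and to combine it with the Azumaya-type principle recalled in the preamble, namely that $A$ is $B$-injective precisely when every $\sigma:E(B)\to E(A)$ satisfies $\sigma(B)\subseteq A$.

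For the forward implication, suppose $M$ is ADS and take idempotents $e,f\in End(E(M))$ with $eM\subseteq M$ and $eE(M)=fE(M)$. The equal-image condition forces $ef=f$ and $fe=e$, so $(e-f)^{2}=0$, $(e-f)e=0$, and $e(e-f)=e-f$. Put $A:=eM$ and $B:=(1-e)M$; from $eM\subseteq M$ one also deduces $(1-e)M\subseteq M$, hence $M=A\oplus B$. A short essential-extension argument (since $M\subseteq_{e}E(M)$ and $eE(M)\cap M=eM$, the submodule $A$ is essential in the injective summand $eE(M)$) identifies $eE(M)$ with $E(A)$, and similarly $(1-e)E(M)$ with $E(B)$. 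Therefore $g:=e-f$ restricts to a homomorphism $E(B)\to E(A)$, and the ADS hypothesis combined with Azumaya forces $g(B)\subseteq A$. For $m=a+b\in M$ we then have $fa=a$ (since $a\in fE(M)$) and $fb=-g(b)\in A$, yielding $fm\in A\subseteq M$.

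For the converse, assume the idempotent property and pick any decomposition $M=A\oplus B$. By \lemref{ADS iff relative injectivity} it suffices to show that $A$ is $B$-injective, so let $h:C\to A$ be a homomorphism from a submodule $C\subseteq B$. Extend $h$ through the injective hulls to $\widetilde{h}:E(B)\to E(A)$, write $E(M)=E(A)\oplus E(B)$, and define $e,f\in End(E(M))$ by $e(a'+b')=a'$ and $f(a'+b')=a'+\widetilde{h}(b')$ for $a'\in E(A)$, $b'\in E(B)$. Both are idempotents with $eE(M)=fE(M)=E(A)$, and $eM=A\subseteq M$, so by hypothesis $fM\subseteq M$. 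In particular, for $b\in B$ we obtain $\widetilde{h}(b)=f(b)\in M\cap E(A)=A$, so $\widetilde{h}|_{B}$ is the desired extension of $h$.

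The principal obstacle is the identification $eE(M)=E(A)$ in the forward direction: once secured, $g=e-f$ becomes a bona fide map between injective hulls and Azumaya's lemma does the rest. The essentiality step is standard but hinges on the simple observation that $eE(M)\cap M=eM$, which uses only the idempotence of $e$ and the containment $eM\subseteq M$.
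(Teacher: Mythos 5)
Your argument is correct, but it follows a genuinely different route from the paper's in both directions. For necessity, the paper never invokes relative injectivity: it sets $C=(1-f)(E(M))\cap M$, checks that $C$ is a complement of $eM$ in $M$, applies the ADS definition to get $M=eM\oplus C$, and then computes $fM=f(C\oplus eM)=eM$ directly. You instead pass through Lemma~\ref{ADS iff relative injectivity} and the Azumaya principle recalled in the preamble, applied to the nilpotent difference $g=e-f:E(B)\to E(A)$; the identities $ef=f$, $fe=e$ and the identification $eE(M)=E(eM)$ (via $eE(M)\cap M=eM$) are exactly the points that make this work, and you verify them. For sufficiency, the paper again works with a complement $C$ of $eM$, takes $f$ to be the projection onto $eE(M)$ along $E(C)$, and uses closedness of $C$ to conclude $M=C\oplus eM$; you instead verify relative injectivity for an arbitrary decomposition $M=A\oplus B$ by encoding an extension $\widetilde{h}:E(B)\to E(A)$ of a given $h:C\to A$ into the idempotent $f(a'+b')=a'+\widetilde{h}(b')$, whose image is $E(A)=eE(M)$, and reading off $\widetilde{h}(B)\subseteq M\cap E(A)=A$. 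Your version buys a more uniform, ``homological'' proof that leans on the already-established equivalence with mutual injectivity and on Azumaya, at the cost of importing those tools; the paper's version is more self-contained, manipulating complements and closures directly from the definition of ADS. One cosmetic remark: in the converse you only exhibit $B$-injectivity of $A$, but since the decomposition is arbitrary the symmetric statement follows by exchanging $A$ and $B$, so mutual injectivity (hence ADS by Lemma~\ref{ADS iff relative injectivity}) is indeed obtained.
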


\begin{proof}
Let us prove necessity: $(1-f)(E(M))\cap M \subseteq _e(1-f)(E(M))$ and $%
f(E(M))\cap M\subseteq _e f(E(M))$. Thus $((1-f)(E(M))\cap M )\oplus
(f(E(M))\cap M )\subseteq _e M$. We claim $f(E(M))\cap M=e(M)$. Note first that $e(E(M)) \cap
M= f(E(M))\cap M$. Clearly $eE(M)\cap M \subseteq eM$. Let $%
C=(1-f)(E(M))\cap M $.  Then 
$C\oplus eM \subseteq_e M$. Because $eM$ is closed $C$ is a complement of $%
eM$ in $M$ (cf. Lemma 6.32 in Lam's book). Because $M$ is ADS we have $%
M=e(M) \oplus C$. Let $g$ be the projection of $eM$ along $C$, so that $%
g(M)=e(M)$. Now $g(M)=e(M)\subseteq f(E(M))$. This gives $eM=(M)=fg(M)=fe(M)$%
. Since $C$ is contained in $(1-f)(E(M))$, $f(C)=0$. Then $fM =f(C\oplus
eM)=eM\subseteq M$.

\noindent Conversely, let $M=eM\oplus (1-e)(M)$ and $C$ be a complement of $%
e(M)$ in $M$. We want to show $M=e(M)\oplus C$. Now, $C\oplus e(M)\subseteq
_{e}M$ and so $E(C)\oplus E(eM)=E(M)$. Hence $E(C)\oplus eE(M)=E(M)$. Let $f$
be the projection on $eE(M)$ along $E(C)$. We have $f(E(M))=e(E(M))$ and 
$E(C)=(1-f)(E(M))$. By hypothesis we have $f(M)\subseteq M$. Let $m$ be in $M$.
  Then $m\in M=E(C)\oplus f(E(M))$, say $m=c+f(m)$, where $c\in E(C)$. 
$c=m-f(m)\in E(C)\cap M=C$, because $C$ is closed. We conclude that 
$M=C\oplus e(M)$.
\end{proof}

We may recall that any endomorphism $f\in End_R(M)$ of a nonsingular module $M$ can be uniquely 
extended to an endomorphism $f^*$ of 
its injective hull $E(M)$.  Let us mention moreover that if $f=f^2$ then $f^*=(f^*)^2$.
Under these notations we obtain the following corollary.
\begin{corollary}
Let $M$ be a right nonsingular 
$R$-module. $M$ is ADS if and only if for every $e=e^2\in End(M)$ and $f=f^2\in End(E(M))$
with $fE(M)= e^*E(M)$, we have $fM\subset M$.
\end{corollary}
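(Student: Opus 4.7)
The plan is to derive this corollary directly from \thmref{Characterization of ADS via idempotents} by translating between idempotents on $M$ and idempotents on $E(M)$ through the extension map. My first step is to record the fact that, since $M$ is nonsingular, the injective hull $E(M)$ is itself nonsingular: any singular element of $E(M)$ would meet $M$ nontrivially along an essential right ideal, contradicting nonsingularity of $M$. From this I would draw the crucial converse observation: if $g\in End(E(M))$ satisfies $gM\subseteq M$, then $g$ must coincide with $(g|_M)^*$, since any two extensions of $g|_M$ differ by an endomorphism whose kernel contains the essential submodule $M$ and so vanishes by nonsingularity of $E(M)$.

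For necessity, I would assume $M$ is ADS and take $e=e^2\in End(M)$ together with $f=f^2\in End(E(M))$ satisfying $fE(M)=e^*E(M)$. The remark preceding the corollary gives $e^*=(e^*)^2$, and by definition of extension $e^*M=eM\subseteq M$. At that point \thmref{Characterization of ADS via idempotents} applied to the pair $(e^*,f)$ delivers $fM\subseteq M$ immediately.

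For sufficiency, I would assume the stated condition and verify the hypothesis of \thmref{Characterization of ADS via idempotents}. Take $g=g^2\in End(E(M))$ with $gM\subseteq M$ and $f=f^2\in End(E(M))$ with $fE(M)=gE(M)$. Setting $e:=g|_M\in End(M)$, one has $e=e^2$, and the uniqueness observation from the first paragraph forces $e^*=g$. Therefore $fE(M)=e^*E(M)$, the hypothesis of the corollary gives $fM\subseteq M$, and \thmref{Characterization of ADS via idempotents} lets me conclude that $M$ is ADS.

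I do not expect a serious obstacle: the argument is essentially a dictionary between the two formulations via the correspondence $e\leftrightarrow e^*$. The only spot requiring real care is the uniqueness of extension, which rests squarely on nonsingularity; without it, the identification $g=(g|_M)^*$ in the sufficiency step would fail and the whole reduction would collapse.
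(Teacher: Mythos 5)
Your argument is correct and is exactly the intended one: the paper gives no explicit proof of this corollary, treating it as immediate from Theorem~\ref{Characterization of ADS via idempotents} together with the remark that idempotents of $End(M)$ extend uniquely to idempotents of $End(E(M))$ when $M$ is nonsingular. Your careful justification of the uniqueness of extension (via nonsingularity of $E(M)$) and of the identification $g=(g|_M)^*$ in the sufficiency direction supplies precisely the dictionary the paper leaves implicit.
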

We are now ready to show, that under some circumstances, an ADS hull can be constructed for a 
nonsingular module.  For a nonsingular right $R$-module $M$, we continue to let $e^*$ denote the unique 
extension of $e^2=e\in End(M)$ to the injective hull $E(M)$ of $M$.

\begin{theorem}
\label{ADS hull} Let $M_R$ be a nonsingular right $R$-module. Let  
$\overline{M}$ denote the intersection of all the ADS submodules of $E(M)$ containing $M$.   
Suppose that for any $e^2=e\in End(\overline{M})$ and for any ADS submodule $N$ of $E(M)$ 
containing $M$ we have $e^*(N) \subset N$.  Then, $\overline{M}$ is, up
to isomorphism, the unique ADS hull of $M$.
\end{theorem}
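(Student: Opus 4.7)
The plan is to verify that $\overline{M}$ is an essential ADS extension of $M$ which is minimal with these properties, and then argue that any other such extension must be isomorphic to it. Since $M \subseteq \overline{M} \subseteq E(M)$ and $M$ is essential in $E(M)$, $\overline{M}$ is essential in $E(M)$ and consequently $E(\overline{M}) = E(M)$. In particular $\overline{M}$ is nonsingular, and the unique extension of any $e = e^2 \in \mathrm{End}(\overline{M})$ to $E(M)$ is precisely the idempotent $e^*$ appearing in the hypothesis. Note also that the family of ADS submodules of $E(M)$ containing $M$ is non-empty, because $E(M)$ itself is injective and hence ADS, so $\overline{M}$ is well defined.

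To prove that $\overline{M}$ is ADS, I apply the Corollary following Theorem~\ref{Characterization of ADS via idempotents}. Let $e = e^2 \in \mathrm{End}(\overline{M})$ and let $f = f^2 \in \mathrm{End}(E(M))$ satisfy $fE(M) = e^*E(M)$; I need $f(\overline{M}) \subseteq \overline{M}$. Fix an ADS submodule $N$ of $E(M)$ with $M \subseteq N$. By hypothesis $e^*(N) \subseteq N$, so $e^*|_N$ is an idempotent endomorphism of $N$ whose unique extension to $E(N) = E(M)$ is $e^*$ itself. Since $N$ is nonsingular (being a submodule of $E(M)$) and ADS, the same corollary applied to $N$ with the idempotent $e^*|_N$ and the idempotent $f$ yields $f(N) \subseteq N$. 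This holds for \emph{every} ADS submodule $N \supseteq M$, so intersecting gives
\[
f(\overline{M}) \subseteq \bigcap_{N} f(N) \subseteq \bigcap_{N} N = \overline{M},
\]
which shows $\overline{M}$ is ADS. By construction, $\overline{M}$ is contained in every ADS submodule of $E(M)$ containing $M$, so it is the unique smallest such submodule, and therefore the smallest ADS essential extension of $M$ realized inside $E(M)$.

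For uniqueness up to isomorphism, suppose $H$ is another ADS hull of $M$, i.e.\ an essential ADS extension of $M$ that is minimal with these properties. Essentiality forces $E(H) = E(M)$, so the injectivity of $E(M)$ provides an embedding $\varphi : H \hookrightarrow E(M)$ restricting to the identity on $M$. Then $\varphi(H)$ is an ADS submodule of $E(M)$ containing $M$, hence $\overline{M} \subseteq \varphi(H)$. Pulling back, $\varphi^{-1}(\overline{M})$ is an ADS submodule of $H$ containing $M$, so the minimality of $H$ forces $\varphi^{-1}(\overline{M}) = H$, i.e.\ $\varphi(H) = \overline{M}$. Thus $H \cong \overline{M}$, as claimed.

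The main obstacle is the second paragraph: making the hypothesis do the right work. The condition $e^*(N) \subseteq N$ for every ADS $N \supseteq M$ is exactly what is needed to restrict $e^*$ to each such $N$ as an idempotent endomorphism of $N$, so that the idempotent characterization of ADS modules can be applied module-by-module and then pulled down to the intersection $\overline{M}$. Without this hypothesis, the natural candidate $\overline{M}$ would not in general carry an idempotent structure compatible with the inner ADS pieces, and the proof would break at precisely this point.
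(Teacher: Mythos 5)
Your proof is correct and follows essentially the same route as the paper: use the hypothesis $e^*(N)\subseteq N$ to view $e^*$ as an idempotent of each ADS submodule $N$, invoke the idempotent characterization of ADS (the corollary to Theorem~\ref{Characterization of ADS via idempotents}) to get $f(N)\subseteq N$, and intersect over all such $N$. You additionally spell out the uniqueness-up-to-isomorphism argument, which the paper's proof leaves implicit; that addition is sound.
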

\begin{proof}
Let $\Omega$ be the set of ADS submodules $N$ such that 
$M<N<E(M)$.
Then $\overline{M}=\bigcap_{N\in \Omega }N$. We claim that $\overline{M}$ is
ADS. Clearly $E(\overline{M})=E(M)$. Let $e=e^2\in End_R(M),f^2=f\in End(E(M))$
such that $e(\overline{M})\subseteq \overline{M}$ and $f(E(M))=e^*(E(M))$.
Since $M$ is nonsingular and $e(\overline{M})\subseteq \overline{M}$, we have 
$e(N)\subseteq N$ for every $N\in \Omega $. So, for every $N\in \Omega $, $%
f(N)\subset N$ because $N$ is ADS. Let $x\in \overline{M}$.  Then $x\in N$ for
every $N\in \Omega $. Hence $f(x)\in N$ for every $N\in \Omega $. Therefore $%
f(x)\in \bigcap_{N\in \Omega }N=\overline{M}$, that is $f(\overline{M}%
)\subseteq \overline{M}$, proving our claim.
\end{proof}

\begin{remarks}
Let us remark that the condition stated in the above theorem is in particular fulfilled if we
consider the ADS hull of a nonsingular ring.  Indeed in this case we consider the ADS rings between
$R$ and $Q:=E(R)$ and projections are identified with idempotents of the rings.  Of course, these 
idempotents remain idempotents in overrings.
\end{remarks}

\bigskip 

\section{COMPLETELY $ADS$ MODULES  }

\begin{theorem}
\label{General decomposition} Let $M=\oplus _{i\in I}M_{i}$ be a
decomposition of a module $M$ into a direct sum of indecomposable modules $%
M_{i}$. Suppose $M$ is completely ADS. Then

\begin{enumerate}
\item[(i)] For every $(i$, $j)\in I^{2}$, $i\ne j$, $M_{i}$ is $M_{j}$-injective.

\item[(ii)] If $(i,j)\in I^{2}$, $i\ne j$ are such that $Hom_{R}(M_{i}$, $%
M_{j})\neq 0$, then $M_{j}$ is simple.

\item[(iii)] $M=S\oplus T$ where $S$ is semisimple and $T=\oplus _{j\in
J\subset I}M_{j} $ is a direct sum of indecomposable modules. Moreover, for
any $\theta \in End(M)$ we have $\theta(S)\subset S$ and  for $j\in J$, 
$\theta(M_{j})\subseteq M_{j}\oplus S$.  
\end{enumerate}
\end{theorem}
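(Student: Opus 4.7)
The plan is to prove the three assertions in order. For (i), I would apply Lemma~\ref{ADS iff relative injectivity} directly to the ADS decomposition $M = M_i \oplus \bigl(\bigoplus_{k\ne i} M_k\bigr)$, which yields mutual injectivity of the two summands; since $M_j$ (for $j \ne i$) is a direct summand of $\bigoplus_{k \ne i} M_k$, it follows at once that $M_i$ is $M_j$-injective.

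For (ii), take $0 \ne f \in \mathrm{Hom}_R(M_i, M_j)$. I would first show $f$ is surjective: the quotient $(M_i \oplus M_j)/(\ker f \oplus 0)$ is a subfactor of $M$ and thus ADS, and is isomorphic via $\bar f$ to $(M_i/\ker f) \oplus M_j$ with $M_i/\ker f \cong f(M_i) \subseteq M_j$. Lemma~\ref{ADS iff relative injectivity} gives that $f(M_i)$ is $M_j$-injective, so the isomorphism $f(M_i) \to M_i/\ker f$, regarded as a map from the submodule $f(M_i) \subseteq M_j$, extends to $g : M_j \to M_i/\ker f$; composing with $\bar f$ produces a retraction $M_j \to f(M_i)$, exhibiting $f(M_i)$ as a direct summand of the indecomposable $M_j$. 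Since $f \ne 0$, this forces $f(M_i) = M_j$. Consequently $M_j^2 \cong (M_i \oplus M_j)/\ker f$ is a subfactor of $M$, hence ADS. Lemma~\ref{ADS iff relative injectivity} applied to the canonical decomposition $M_j \oplus M_j$ then shows $M_j$ is quasi-injective; the standard stability argument—$M_j$ is invariant under every idempotent in $\mathrm{End}(E(M_j))$ by quasi-injectivity, and if $E(M_j) = E_1 \oplus E_2$ nontrivially, the induced idempotent in $\mathrm{End}(M_j)$ is $0$ or $\mathrm{id}$ by indecomposability of $M_j$, in either case contradicting essentiality of $M_j$ in $E(M_j)$—upgrades this to the uniformity of $M_j$. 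Finally, suppose $P \subsetneq M_j$ is a proper nonzero submodule; by uniformity, $P$ is essential in $M_j$, so the ``diagonal'' $\Delta = \{(p,p) : p \in P\}$ is maximal among submodules of $P \oplus M_j$ (a subfactor of $M_j^2$) with zero intersection with $P \oplus 0$. Yet $(P \oplus 0) + \Delta = P \oplus P \ne P \oplus M_j$, contradicting ADS of $P \oplus M_j$. Hence $M_j$ is simple.

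For (iii), set $J := \{i \in I : M_i \text{ is not simple}\}$, $S := \bigoplus_{i \in I \setminus J} M_i$ and $T := \bigoplus_{j \in J} M_j$; then $M = S \oplus T$ and $S$, being a direct sum of simple modules, is semisimple. For $\theta \in \mathrm{End}(M)$, write $\theta_{ab} : M_a \to M_b$ for the component maps obtained from projecting $\theta|_{M_a}$ onto each $M_b$. By (ii), $\theta_{ab} = 0$ whenever $b \in J$ and $a \ne b$, as $\mathrm{Hom}(M_a, M_b) \ne 0$ would force $M_b$ simple. Thus, for $a \in I \setminus J$ every nonzero component of $\theta(M_a)$ lies in some $M_b$ with $b \in I \setminus J$, giving $\theta(S) \subseteq S$; while for $a = j \in J$ the only nonzero components land in $M_j$ itself or in summands of $S$, giving $\theta(M_j) \subseteq M_j \oplus S$.

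The most delicate step is the final diagonal argument in (ii): it depends crucially on $M_j$ being uniform, for otherwise the proper submodule $P$ need not be essential and the diagonal may fail to be maximal. The entire chain ``$f$ surjective $\Rightarrow M_j^2$ is an ADS subfactor $\Rightarrow M_j$ quasi-injective $\Rightarrow M_j$ uniform'' is what makes that obstruction disappear, so keeping these implications in the correct order is the main structural hurdle.
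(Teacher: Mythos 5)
Your argument is correct, and parts (i), (iii), and the first half of (ii) (surjectivity of a nonzero $f:M_i\to M_j$, hence quasi-injectivity of $M_j$ via the ADS subfactor $M_j\oplus M_j$) coincide with the paper's proof. Where you genuinely diverge is the last step of (ii), proving that $M_j$ is simple. The paper stays elementary: for $0\ne m_2=\sigma(m_1)\in M_2$ it observes that $\sigma(m_1R)\oplus M_2$ sits inside the subfactor $M/(\ker\sigma\cap m_1R)$, hence is ADS, and the same ``a relatively injective submodule of an indecomposable module is the whole module'' trick gives $m_2R=M_2$; since every nonzero element generates, $M_2$ is simple. You instead upgrade quasi-injectivity to uniformity via the invariance of $M_j$ under $\mathrm{End}(E(M_j))$ together with indecomposability, and then run a diagonal-complement argument in the ADS subfactor $P\oplus M_j$. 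Both routes work: the paper's avoids injective hulls entirely and is shorter, while yours isolates the structural fact that these $M_j$ are uniform quasi-injective before concluding. One spot you should expand: the claim that essentiality of $P$ in $M_j$ makes $\Delta$ maximal among submodules meeting $P\oplus 0$ trivially is true but not immediate. Any $X\supseteq\Delta$ with $X\cap(P\oplus 0)=0$ is, via the second projection (which is injective on $X$), the graph of a homomorphism $\phi:Q\to P$ with $P\subseteq Q\subseteq M_j$ and $\phi|_P=\mathrm{id}_P$; thus $Q=P\oplus\ker\phi$, and $P\le_e M_j$ forces $\ker\phi=0$, i.e.\ $X=\Delta$. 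Finally, in (iii) your $S$ (all simple summands) is larger than the paper's (only those $M_i$ receiving a nonzero map from some $M_j$, $j\ne i$), but by statement (ii) either choice satisfies the required invariance properties, so this is only a cosmetic difference.
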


\begin{proof}
Since the ADS property is inherited by direct summands, statement $(i)$ is
an obvious consequence of Lemma \ref{ADS iff relative injectivity}.

\noindent $(ii)$ For convenience, let us write $i=1$, $j=2$ and suppose that 
$0\ne \sigma \in Hom_R(M_1,M_2)$. We have $\sigma(M_1)\oplus M_2\oplus
\dots\cong M_1/\ker(\sigma)\oplus M_2\oplus \dots =M/\ker(\sigma)$ is ADS, by
assumption. Hence $\sigma(M_1)$ is $M_2$-injective and, since $\sigma(M_1)
\subseteq M_2$, we get that $\sigma(M_1)$ is a direct summand of $M_2$. But $%
M_2$ is indecomposable, hence $\sigma(M_1)=M_2$. 
We conclude that $M_2\oplus M_2=\sigma(M_1)\oplus M_2$ is $ADS$. 
This means that $M_2$ is 
$M_2$-injective i.e. $M_2$ is quasi-injective.

Let us now show that for any $0\ne m_2\in M_2$, $m_2R=M_2$. Since $%
\sigma(M_1)=M_2$, there exists $m_1\in M_1$ such that $\sigma(m_1)=m_2$. We
remark that $\sigma(m_1R)\oplus M_2=\frac{m_1R}{\ker \sigma\cap m_1R}\oplus
M_2=\frac{m_1R\oplus M_2}{\ker\sigma \cap m_1R}$ is a submodule of $\frac{M}{%
\ker \sigma \cap m_1R}$. Since $M$ is completely ADS, we conclude that $%
\sigma(m_1R)\oplus M_2$ is ADS. As earlier in this proof, relative
injectivity and indecomposability lead to $\sigma(m_1R)=M_2$. Hence $%
m_2R=M_2 $, as desired.

$(iii)$ Let $I_1$ consist of those $i\in I$ such that
there exists $j\in I$, $j\ne i$ with $Hom_R(M_j,M_i)\ne 0$.  We define $S:=\oplus_{i\in I_1}M_i$ and $T:=\oplus_{j\in J}M_j$ where $J:=I\setminus I_1$.  
Statement $(ii)$ above implies that $M=S \oplus T$ where $S$ is
semisimple and $T$ is a sum of indecomposable modules. Moreover if $j\in J$,
then for any $i\in I, \; i\ne j$, we have $Hom_R(M_i,M_j)=0$. It is clear that, for any 
$\theta \in End(M)$ we must have $\theta (S)\subset S$.  For $j\in J$ and $x\in M_j$ let us write
$\theta(x) = y + z$, where $z\in S$ and $y\in T$.  Since, for $l\in J$, $l\ne j$, $Hom_R(M_j,M_l)=0$, we have $\pi_l\theta(x)=0$, where $\pi_l: M\rightarrow M_l$ is the natural epimorphism.   Thus $\pi_l(y)=0$.  This shows that $y\in M_j$, as required.
\end{proof}

Oshiro's theorem states that any quasi-discrete module is a direct sum of indecomposable modules (cf. \cite{MM} Theorem $4.15$).  Hence the above Theorem \ref{General decomposition} applies to  completely ADS quasi-discrete modules.  In general for a quasi-discrete module we have the following theorem:
\begin{theorem}
Let $M$ be a completely ADS quasi-discrete module.  Then $M$ can be written as 
$M=S\oplus M_1\oplus M_2$, where $S$ is semisimple, $M_1$ is a direct sum of local modules and
$M_2$ is equal to its own radical.
\end{theorem}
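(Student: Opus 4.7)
The plan is to invoke Oshiro's theorem together with Theorem~\ref{General decomposition}(iii), and then refine the summand $T$ using the hollow structure forced by D$_1$ on its indecomposable components.

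First, Oshiro's theorem applied to the quasi-discrete module $M$ gives a decomposition $M = \bigoplus_{i\in I} M_i$ with each $M_i$ indecomposable. Because $M$ is completely ADS, Theorem~\ref{General decomposition}(iii) then yields $M = S \oplus T$ with $S$ semisimple and $T = \bigoplus_{j\in J} M_j$ for a suitable $J \subseteq I$, the indices corresponding to indecomposable summands receiving no nonzero homomorphism from the other $M_i$'s.

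The next step is to observe that any indecomposable direct summand of a quasi-discrete module is hollow, i.e.\ every proper submodule is small. Indeed, applying D$_1$ to a proper submodule $A \subsetneq M_j$ yields a decomposition $M_j = M' \oplus M''$ with $M' \subseteq A$ and $M''\cap A$ small in $M_j$; indecomposability forces $M' = 0$, so $A = M''\cap A$ is small in $M_j$. Consequently each $M_j$ with $j\in J$ is hollow, and any nonzero hollow module $H$ satisfies the familiar dichotomy: either $H$ admits a (necessarily unique) maximal submodule, in which case $H$ is local, or $H$ has no maximal submodule at all, in which case $\mathrm{Rad}(H) = H$.

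Partitioning $J = J_1 \sqcup J_2$ according to this dichotomy and setting $M_1 := \bigoplus_{j\in J_1} M_j$ and $M_2 := \bigoplus_{j\in J_2} M_j$, the module $M_1$ is by construction a direct sum of local modules, while $\mathrm{Rad}(M_2) = \bigoplus_{j\in J_2} \mathrm{Rad}(M_j) = M_2$, producing the required $M = S \oplus M_1 \oplus M_2$. Nothing in this plan requires real computation; the main care lies in explicitly invoking the two structural facts used, namely the hollowness of indecomposable summands under D$_1$ and the radical dichotomy for hollow modules.
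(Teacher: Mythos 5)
Your proof is correct, but it takes a genuinely different route from the paper's. The paper first invokes Proposition 4.17 and Corollary 4.18 of Mohamed--M\"uller to split $M=N\oplus M_2$ with $\mathrm{Rad}(N)$ small in $N$ and $\mathrm{Rad}(M_2)=M_2$, and only then applies Theorem~\ref{General decomposition} to $N$; the locality of the resulting indecomposable summands of $N$ comes from the smallness of its radical. You instead apply Oshiro's theorem and Theorem~\ref{General decomposition}(iii) to all of $M$ at once, and then sort the remaining indecomposable summands by hand, proving that each is hollow via D$_1$ and then using the dichotomy that a hollow module is either local or equal to its own radical. Both arguments are sound; yours is more self-contained (it replaces the two cited results of \cite{MM} by the elementary hollow/radical dichotomy), at the cost of two small facts you use implicitly and should record: that a direct summand of a quasi-discrete module is again quasi-discrete (so that $M_j$ itself satisfies D$_1$ --- alternatively, apply D$_1$ of $M$ to $A\subsetneq M_j$ and note that a submodule of $M_j$ small in $M$ is small in $M_j$), and that the radical commutes with direct sums, which gives $\mathrm{Rad}(M_2)=M_2$. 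Also, in the D$_1$ step you should rule out the alternative $M''=0$ explicitly, which is immediate since $A$ is proper.
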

\begin{proof}
Corollary 4.18 and Proposition 4.17 in \cite{MM} imply that $M=N\oplus M_2$ where $N$ has a small 
radical and $M_2$ is equal to its own radical.  Theorem \ref{General decomposition} applied to $N$
yields the conclusion.
\end{proof}
  
We now apply the previous theorem to the case of semiperfect modules.

\begin{theorem}
\label{semiperfect modules} Let $M$ be a semiperfect module with a completely ADS
projective cover $P$. Then $M$ can be presented as $M = S\oplus T$ where $S$
is semisimple and $T$ is a sum of local modules. Moreover any partial sum in
this decomposition contains a supplement of the remaining terms.
\end{theorem}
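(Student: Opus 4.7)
The plan is to pull the problem up to the projective cover of $M$, apply Theorem \ref{General decomposition} there, and then push the decomposition back down. Let $p\colon P\to M$ be a projective cover with $K=\ker p$ small in $P$. By standard theory the projective module $P$ is semiperfect (since $M$ is, $\mathrm{rad}(P)$ must be small and $P/\mathrm{rad}(P)\cong M/\mathrm{rad}(M)$ is semisimple), so $P=\bigoplus_{i\in I}P_i$ with each $P_i$ local (hence indecomposable). As $P$ is completely ADS, Theorem \ref{General decomposition}(iii) yields $P=S'\oplus T'$ with $S'=\bigoplus_{i\in I_1}P_i$ semisimple (each such $P_i$ is therefore simple) and $T'=\bigoplus_{j\in J}P_j$, where $J=I\setminus I_1$.

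The crucial intermediate claim is that $K\subseteq T'$. I plan to establish this by showing that the image $\pi(K)$ is small in $S'$, where $\pi\colon P\to S'$ denotes the projection along $T'$: if $\pi(K)+N=S'$ for some $N\subseteq S'$, then $K+N+T'=P$ and smallness of $K$ in $P$ gives $N+T'=P$, whence applying $\pi$ yields $N=S'$. As $S'$ is semisimple, its only small submodule is $0$, so $\pi(K)=0$ and $K\subseteq T'$. It follows that $M=p(S')\oplus p(T')$; setting $S:=p(S')$ and $T:=p(T')$, the submodule $S$ is a semisimple (isomorphic) copy of $S'$, and $T=\sum_{j\in J}p(P_j)$ is a sum of local modules, since each $p(P_j)$ is a nonzero quotient of the local module $P_j$.

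For the supplement clause I would argue that every partial sum $N:=p(P_{I_0})$ (with $I_0\subseteq I$) is itself a supplement of the complementary partial sum $N^{c}:=p(P_{I\setminus I_0})$ in $M$. The equation $N+N^c=M$ is immediate from $P=P_{I_0}\oplus P_{I\setminus I_0}$. For minimality one needs $N\cap N^c$ to be small in $N$. The plan is to use that $K\cap P_{I_0}$ is small in the summand $P_{I_0}$, so $p|_{P_{I_0}}\colon P_{I_0}\to N$ is itself a projective cover, hence a small epimorphism; and small epimorphisms carry small submodules to small submodules. A short calculation gives $N\cap N^c=p\bigl(\pi_{P_{I_0}}(K)\bigr)$, where $\pi_{P_{I_0}}(K)$ is small in $P_{I_0}$ by the same small-projection argument used for $S'$; thus $N\cap N^c$ is small in $N$, and $N$ itself is the desired supplement contained in the partial sum.

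The principal obstacle I foresee is the step $K\subseteq T'$: the whole decomposition of $M$ rests on it, and although the argument is short it uses smallness of $K$ in $P$ and semisimplicity of $S'$ together in a somewhat subtle way. Once this is in hand, everything else reduces to standard manipulations with small submodules, projective covers, and small epimorphisms.
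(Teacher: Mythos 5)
Your proof is correct, and the opening reduction (pass to the semiperfect projective cover $P$, decompose it into local summands, and apply Theorem~\ref{General decomposition} to get $P=S'\oplus T'$) is exactly the paper's. Where you diverge is in how the decomposition is pushed down to $M$ and in the supplement clause. The paper invokes Lemma 4.40 of Mohamed--M\"uller: since homomorphic images of $M$ have projective covers, $\sigma(T')$ contains a supplement $X$ of $\sigma(S')$; then $\sigma(S')\cap X\ll X$ together with semisimplicity forces $\sigma(S')\cap X=0$, whence $M=\sigma(S')\oplus\sigma(T')$, and the same lemma is cited again for the final statement. You instead prove the sharper structural fact $K=\ker p\subseteq T'$, by observing that the projection of the small submodule $K$ into the semisimple summand $S'$ is small and hence zero (in a semisimple module every submodule is a direct summand, so the only small submodule is $0$ --- worth stating explicitly, since it is the hinge of the whole step); directness of $M=p(S')\oplus p(T')$ then falls out for free, with $p$ injective on $S'$. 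For the last clause you verify by hand, via the identity $N\cap N^{c}=p\bigl(\pi_{P_{I_0}}(K)\bigr)$, the smallness of projections and images of small submodules, and the standard characterization of supplements by small intersections, that each partial sum is \emph{itself} a supplement of the complementary one --- slightly stronger than the paper's ``contains a supplement.'' Your route buys self-containedness (no appeal to Lemma 4.40) and explicit control of the kernel; the paper's route buys brevity by outsourcing the existence of supplements to a standard lemma. Both are sound.
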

 
\begin{proof}

Clearly $P$ is semiperfect and projective (cf. Theorem $11.1.5$ in \cite{K}).  
Combining the statements in $42.5$ in \cite{W} and Corollary $4.54$ in \cite{MM},
we get that $P$ is discrete and is a direct sum of local modules.
The remark preceding the present theorem then implies that
 we can write $P=S^{\prime }\oplus T^{\prime }$ 
where $S^{\prime }$ is semisimple and 
$T^{\prime }$ is a direct sum of indecomposable local modules.  Let $\sigma$ be an
onto homomorphism from $P$ to $M$ with small kernel $K$. We thus have 
$M=\sigma(S^{\prime }) + \sigma(T^{\prime })$. Since homomorphic images of $M$
have projective covers, Lemma 4.40 \cite{MM} shows that $\sigma(T^{\prime })$
contains a supplement $X$ of $\sigma(S^{\prime })$. In particular, we have $%
\sigma(S^{\prime }) \cap X<< X$. Since $\sigma(S^{\prime })$ is semisimple we
conclude that $\sigma(S^{\prime }) \cap X=0$ and hence $M=\sigma(S^{\prime
}) \oplus \sigma(T^{\prime })$. Since homomorphic images of a local module
are still local, we conclude that the terms appearing in $\sigma (T^{\prime
}) $ are local modules. The last statement is a direct consequence of Lemma 4.40 \cite{MM}.
\end{proof}

Let us mention that local rings which are not uniform give examples of 
semiperfect completely ADS modules which are not CS and hence not quasi-continuous.  
 
The following corollary characterizes semiperfect $\pi c$-rings providing a
new proof of Theorem 2.4 in \cite{GJ}.

\begin{theorem}
Let $R$ be a semiperfect ring such that every cyclic module is quasi-continuous. 
Then $R=\oplus_{i\in I} A_i$ where each $A_i,\; i\in I$ is simple artinian or a
valuation ring.
\end{theorem}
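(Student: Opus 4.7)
My plan is to decompose $R$ as a finite product of rings, each either simple artinian or a right valuation ring. After a reduction to the case where $R$ is indecomposable as a ring, the proof splits according to the number of primitive idempotents.

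Since $R$ is semiperfect, write $1=e_{1}+\cdots+e_{n}$ with the $e_{i}$ primitive orthogonal idempotents, so that $R_{R}=\bigoplus_{i=1}^{n}e_{i}R$ is a direct sum of indecomposable local modules; for any $S\subseteq\{1,\dots,n\}$ the partial sum $\bigoplus_{i\in S}e_{i}R=\bigl(\sum_{i\in S}e_{i}\bigr)R$ is cyclic and hence quasi-continuous by hypothesis. Declare $i\sim j$ whenever $\operatorname{Hom}_{R}(e_{i}R,e_{j}R)\neq 0$ or $\operatorname{Hom}_{R}(e_{j}R,e_{i}R)\neq 0$ and pass to the generated equivalence; the sum of the $e_{i}$ in each equivalence class is a central idempotent, yielding a ring product $R=R_{1}\times\cdots\times R_{m}$ with each $R_{k}$ indecomposable as a ring and still satisfying the cyclic-quasi-continuous hypothesis. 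We may therefore assume $R$ itself is indecomposable as a ring.

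If $n=1$, then $R$ is local. For any $a,b\in R$, the cyclic module $R/(aR\cap bR)$ is quasi-continuous; since $R$ is local, this quotient is also a local module (its unique maximal submodule being the image of $J(R)$), hence indecomposable and therefore uniform. The images of $aR$ and $bR$ in this uniform quotient meet trivially, so one of them is zero, giving $aR\subseteq bR$ or $bR\subseteq aR$. Hence $R$ is a right valuation ring.

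If $n\geq 2$, the aim is to show $R$ is simple artinian, i.e., each $e_{j}R$ is simple. For $i\neq j$, $e_{i}R\oplus e_{j}R$ is cyclic and hence ADS, so by Lemma~\ref{ADS iff relative injectivity} the summands $e_{i}R$ and $e_{j}R$ are mutually injective. By ring-indecomposability, for each $j$ there is some $i\neq j$ with $\operatorname{Hom}_{R}(e_{i}R,e_{j}R)\neq 0$ (if only the opposite direction gives a nonzero Hom, first derive the simplicity of $e_{i}R$ via the argument below and transfer it to $e_{j}R$ using the uniqueness, up to isomorphism, of projective covers of isomorphic simple tops). Given $0\neq\sigma\colon e_{i}R\to e_{j}R$, the cyclic quotient $(e_{i}R\oplus e_{j}R)/\ker\sigma\cong\sigma(e_{i}R)\oplus e_{j}R$ is ADS, and mutual injectivity together with $\sigma(e_{i}R)\subseteq e_{j}R$ and the indecomposability of $e_{j}R$ force $\sigma$ to be surjective; the same quotient equals $e_{j}R\oplus e_{j}R$, so $e_{j}R$ is quasi-injective. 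Adapting Theorem~\ref{General decomposition}(ii) to the present setting then yields that $e_{j}R$ is simple, so $R_{R}$ is semisimple and $R$, being indecomposable as a ring, is simple artinian.

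The principal obstacle is the last step above: invoking Theorem~\ref{General decomposition}(ii) directly would require $R_{R}$ to be completely ADS, which is not obviously forced by ``every cyclic is quasi-continuous'' since arbitrary subfactors of $R_{R}$ need not be cyclic. The workaround is to observe that the subfactors entering that argument, of the form $(yR\oplus e_{j}R)/(\ker\sigma\cap yR)$ for $y\in e_{i}R$ with $\sigma(y)\neq 0$, sit inside the cyclic quasi-continuous quotient $(e_{i}+e_{j})R/(\ker\sigma\cap yR)$; one reruns the mutual-injectivity plus indecomposability argument in this restricted context, using also the quasi-injectivity of $e_{j}R$ already established.
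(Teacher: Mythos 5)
Your reduction to an indecomposable ring, the treatment of the local case (cyclic quotients of a local ring are local, hence indecomposable, hence uniform by quasi-continuity), and the first half of the non-local case (any nonzero $\sigma\colon e_iR\to e_jR$ is onto because $(e_i+e_j)R/\ker\sigma\cong\sigma(e_iR)\oplus e_jR$ is cyclic, hence quasi-continuous, hence ADS, so $\sigma(e_iR)$ is an $e_jR$-injective submodule of the indecomposable $e_jR$; consequently $e_jR\oplus e_jR$ is quasi-continuous and $e_jR$ is quasi-injective) all run parallel to the paper and are sound. The genuine gap is exactly where you flag it, and your proposed workaround does not close it. To conclude that $e_jR$ is simple you need, for each $0\neq m_2=\sigma(y)\in e_jR$, that $\sigma(yR)$ is $e_jR$-injective, so that $\sigma(yR)\subseteq e_jR$ splits off and equals $e_jR$. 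In Theorem \ref{General decomposition}(ii) this relative injectivity comes from the hypothesis that \emph{every subfactor} is ADS, applied to $\sigma(yR)\oplus e_jR$. Here that module is only a \emph{submodule} of the cyclic quasi-continuous quotient $(e_i+e_j)R/(\ker\sigma\cap yR)$, and it is generally not cyclic, so the hypothesis does not reach it. Moreover, ``sitting inside'' a quasi-continuous module confers nothing: what mutual injectivity of the summands of $(e_i+e_j)R/(\ker\sigma\cap yR)$ gives you is that $e_iR/(\ker\sigma\cap yR)$ is $e_jR$-injective, and relative injectivity does not pass to submodules (e.g.\ $\mathbb{Z}/p^2$ is $\mathbb{Z}/p^2$-injective over $\mathbb{Z}$ but its submodule $p\mathbb{Z}/p^2$ is not). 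Nor does the quasi-injectivity of $e_jR$ help: a quasi-injective indecomposable module is uniform and satisfies C2, but that does not make an arbitrary cyclic submodule a direct summand. So ``rerun the mutual-injectivity plus indecomposability argument'' is precisely the step that has no engine behind it; as it stands your argument would only prove $e_jR$ simple if you already knew every cyclic submodule of $e_iR$ is $e_jR$-injective, which is essentially the conclusion.

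For comparison, the paper does not try to adapt Theorem \ref{General decomposition}(ii) at this point: after showing that every nonzero $\sigma\colon e_sR\to e_tR$ ($s\neq t$) is an isomorphism (using projectivity of $e_tR$ to get $\ker\sigma=0$), it invokes Lemma 2.3 of \cite{GJ}, which states that in a ring whose cyclics are quasi-continuous, distinct isomorphic indecomposable summands $e_sR\cong e_tR$ must be minimal right ideals; that lemma carries the content your workaround is missing. A secondary, smaller issue: your parenthetical transfer of simplicity from $e_iR$ to $e_jR$ via ``uniqueness of projective covers of isomorphic simple tops'' presupposes the tops are isomorphic, which is not given; the correct transfer is simply that a nonzero hom in either direction is an isomorphism $e_iR\cong e_jR$, so simplicity of one gives simplicity of the other. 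To repair your proof, either cite \cite{GJ} Lemma 2.3 as the paper does, or supply a genuine argument (not a reference to Theorem \ref{General decomposition}) that a quasi-injective local projective summand $e_jR$ admitting an isomorphism from an orthogonal summand must be simple.
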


\begin{proof}

Since $R$ is semiperfect $R=B_1\oplus B_2\oplus \dots \oplus B_n$ a direct 
sum of indecomposable right ideals.  In view of the fact that quasi-continuous modules are ADS,
Theorem \ref{General decomposition} gives a decomposition 
$R=e_1R\oplus e_2R \oplus \dots \oplus e_kR \oplus \dots \oplus e_nR$ where $e_iR$ are simple right ideals 
for $1\le i \le k$ and $e_jR$ are local right ideals for $k < j \le n$.
Let $\sigma$ be a homomorphism from $e_sR$ to $e_tR$ for some  $1\le s,t \le n$.
Then $e_sR/\ker \sigma$ embeds in  $e_tR$.  Since $R/\ker( \sigma)$ is quasi-continuous,
$e_sR/\ker \sigma$ is $e_tR$-injective and hence $e_sR/\ker(\sigma)$ splits in $e_tR$.  This shows that either
$e_sR/\ker(\sigma)\cong e_tR$ or $\ker(\sigma)=e_sR$, that is $\sigma=0$.  Since $e_tR$ is projective, if 
$e_sR/\ker(\sigma)\cong e_tR$, then $\ker(\sigma)$ splits in $e_sR$, thus $\ker(\sigma)=0$.  In short
we get that if $\sigma\ne 0$ then $e_sR\cong e_tR$, the latter isomorphism implies $e_sR$ and $e_tR$ are 
minimal right ideals (cf. Lemma 2.3 in \cite{GJ}).  By grouping the right ideals $e_iR$ according to their 
isomorphism classes, we get $R=A_1\oplus A_2\oplus \dots \oplus A_l$, $l\le n$, 
 where each $A_i$ is either a simple artinian ring
or a local ring. We claim that if $A_i$ is a local ring then it is a
valuation ring. We thus have to show that any pair of two nonzero submodules 
$C,D$ of the ring $A_i$ are comparable. Let us consider the right submodules 
$\frac{C}{C\cap D}$ and $\frac{D}{C\cap D}$ of $\frac {R}{C\cap D}$. Since $%
A_i/ (C \cap D)$ is a local quasi-continuous it is uniform, but $C / (C \cap D)
\bigcap D/ (C \cap D) =0$. Therefore $C/(C \cap D)$ or $D/(C \cap D) = 0$ hence $C$
and $D$ are indeed comparable.
\end{proof}

Let us conclude this paper with some questions:
\begin{enumerate}
\item It is known that if $R_R$ and $_RR$ are both CS then $R$ is Dedekind finite.
What could be the analogue of this for ADS modules?
\item Does a directly finite ADS module have the internal cancellation property?
(cf. Theorem 2.33 in \cite{MM}, for the quasi-continuous case).
\item  What can be said of a module which is ADS and has the C$_2$ property?
\end{enumerate}

\vspace{5mm}

\centerline{ACKNOWLEDGEMENT}

We thank the referee for drawing our attention to a number of  typos.

\vspace{5mm}

\vspace{5mm}
\noindent \normalsize Adel Alahmadi\\
\normalsize King Abdullaziz University, Jeddah, Saudi Arabia\\
    \normalsize E-mail: adelnife2@yahoo.com \vspace{6pt}\\
\normalsize S.K. Jain\\
\normalsize King Abdullaziz University, Jeddah, Saudi Arabia\\
\normalsize and Ohio University, Athens, USA \\
     \normalsize E-mail: jain@ohio.edu \vspace{6pt}\\
\normalsize  Andr\'e Leroy\\                                                                                                              
\normalsize  Universit\'{e} d'Artois,  Facult\'{e} Jean Perrin, Lens, France\\
\normalsize and MECAA, King Abdullaziz University, Jeddah, Saudi Arabia\\
 \normalsize  E-mail: leroy@euler.univ-artois.fr


\begin{thebibliography}{GJ}

\bibitem[BR]{BR} W.D. Burgess, R. Raphael, On modules with the absolute direct summand property, Ring 
theory (Granville, OH, 1992), 137–148, World Sci. Publ., River Edge, NJ, 1993.

\bibitem[F]{F} L. Fuchs, Infinite abelian groups, Pure and applied mathematics,
A Series of Monographs and Textbooks Vol 36, I, 
Academic Press, New York, San Francisco, London 1970.

\bibitem[G]{G} K.R. Goodearl, Von Neumann regular rings, Krieger publishing company, Malabar, Florida 1991. 

\bibitem[GJ]{GJ} V.~K.~Goel and S.~K.~Jain, $\pi$-injective modules and rings whose
cyclics are $\pi$-injective, Communications in Algebra, \textbf{6}, 1978,
59-73.

\bibitem[K]{K} F.~Kasch, Modules and rings, Academic Press, 1982

\bibitem[MM]{MM} S.H.~Mohammed and B.J.~M\"{u}ller, 
Continuous and discrete modules, London Mathematical Society Lecture
Notes Series, 147, Cambridge University Press, New York, 1990.

\bibitem[W]{W} R. Wisbauer, Foundations of Module and Ring Theory,
Gordon and Breach Science Publishers, Reading (1991).


\end{thebibliography}
\end{document}